\newtheorem{Theo}{Theorem}
\newtheorem{Assu}{Assumption}
\newtheorem{Rem}{Remark}
\newtheorem{Lem}{Lemma}
\newtheorem{Pro}{Proposition}
\newtheorem{Exa}{Example}
\numberwithin{equation}{section}
 \numberwithin{Lem}{section}
 \numberwithin{Defi}{section}
 \numberwithin{Theo}{section}
 \numberwithin{Pro}{section}
 \numberwithin{Rem}{section}
  \numberwithin{Coro}{section}
  \numberwithin{Fig}{section}
 \numberwithin{Exa}{section}
 \def\RR{\mathbb{R}}
\def\EE{\mathbb{E}}
\def\RR{\mathbb{R}}
\def\EE{\mathbb{E}}
\def\sR{\mathsf{R}}
\def\qqquad{\qquad\quad}
\newcommand{\blc}{\big(}
\newcommand{\brc}{\big)}
\newcommand{\lc}{\left(}
\newcommand{\rc}{\right)}
\newcommand{\lk}{\left[}
\newcommand{\rk}{\right]}
\newcommand{\lft}{\left}
\newcommand{\rht}{\right}
\begin{document}

\title[Mean square stability of STM for SDEs driven by fBm]{Mean square stability of stochastic theta method for stochastic  differential equations driven by fractional Brownian motion}

%
\author{Min Li $^1$}
\address{$^1$School of Mathematics and Physics \emph{and} Center for Mathematical Sciences, China University of Geosciences, Wuhan, China}
\email{liminmaths@163.com}

\author{Yaozhong Hu $^2$}
\address{$^2$Department of Mathematical and Statistical Sciences, University of Alberta, Edmonton, Canada}
\email{yaozhong@ualberta.ca}

\author{Chengming Huang $^3$}
\address{$^3$School of Mathematics and Statistics \emph{and} Hubei Key Laboratory of Engineering Modelling and Scientific Computing, Huazhong University of Science and Technology, Wuhan, China}
\email{chengming\_huang@hotmail.com}

\author{Xiong Wang $^{2,\ast}$}
\email{xiongwang@ualberta.ca}

\address{$^{\ast}$Corresponding author}

%
%
%
%
%
%

\begin{abstract}
 In this paper,   we study the mean square stability of
the solution and its stochastic theta scheme
  for the following stochastic differential equations
 drive   by fractional Brownian motion with Hurst parameter $H\in (\frac 12,1)$:
 $$
 	dX(t)=f(t,X(t))dt+g(t,X(t))dB^{H}(t).
 $$
Firstly, we consider the special case when $f(t,X)=-\lambda\kappa t^{\kappa-1}X$ and $g(t,X)=\mu X$. The solution is explicit and is mean square stable when  $\kappa\geq 2H$.
It is proved that if the parameter   $2H\leq\kappa\le 3/2$ and $ \frac{\sqrt{3/2}\cdot e}{\sqrt{3/2}\cdot e+1}
(\approx 0.77)\leq \theta\leq 1$
  or $\kappa>3/2$ and $1/2<\theta\le 1$,
  the stochastic theta method   reproduces  the mean square stability;
and that  if $0<\theta<\frac 12$, the numerical method does  not  preserve this stability unconditionally.
Secondly, we study the stability of the solution and its stochastic theta scheme  for   nonlinear equations. Due to presence of long memory, 
even the problem of stability   in the mean square sense  of the solution   has not been    well studied 
since the  conventional techniques powerful for stochastic differential equations driven by Brownian motion are no longer   applicable.  Let alone the
stability of  numerical schemes.   We need to develop   completely  new set of  techniques to deal with this difficulty. 
Numerical examples are carried out to illustrate our theoretical results. 
\end{abstract}

\keywords{Stochastic  differential equations driven by fractional Brownian motion;
stochastic theta method; mean square stability; confluent hypergeometric functions;
 Gaussian correlation inequality; law of large numbers.}



\maketitle

\section{Introduction  and main results}\label{sec1}
{ Numerical stability analysis of stochastic differential equations (SDEs) 
 is an important topic in numerical analysis and scientific computing.}
  In order to get insight into the stability behavior of numerical methods
  for SDEs, Saito and Mitsui 
   \cite{Mitsui1996stability}
 studied the mean square stability of several  numerical  schemes  for
  the following stochastic test problem driven by standard Brownian motion (Bm)
 \begin{align}\label{eq.SDE_B}
 dX(t)=\lambda X(t)dt+\mu X(t)dB(t), ~~~\lambda, \mu\in \mathbb{C}\,,
 \end{align}
 with initial value $X(0)\neq 0$  with probability 1 and $\mathbb{E}\mid X(0)\mid ^2<\infty$,  where $dB(t)$ is interpreted in It\^o sense.
 The solution of \eqref{eq.SDE_B} is said to be \emph{mean square stable} if
 \begin{equation}\label{defn.MSS}
 	\lim_{t\to \infty}\mathbb{E}\mid X(t)\mid ^2=0\,.	
 \end{equation}
 As is well-known, the mean square stability of \eqref{eq.SDE_B} is characterized by
\begin{equation*}
 Re(\lambda)+\frac{1}{2}\lvert \mu\rvert^2 <0,
\end{equation*}
where $Re (\lambda)$ denotes the real part of $\lambda$.  
Higham \cite{Higham2000A-stability,Higham2000stability} studied the mean square stability properties
 of stochastic theta method and stochastic theta Milstein method for the  test  equation  \eqref{eq.SDE_B}.
The A-stability (which means that the numerical method preserves the stability of the
underlying test problem unconditionally) of stochastic theta method (STM) and the stochastic theta Milstein method
is  proved
when   $\theta\geq \frac{1}{2}$ and $\theta\geq \frac{3}{2}$, respectively.

Subsequently,  the   stability of the numerical method for nonlinear SDEs driven by Brownian motion
\begin{equation}\label{gene_SDE}
	dX(t)=f(t,X(t))dt+g(t,X(t))dB(t)
\end{equation}
  received much attention in the past decades. Assume that the drift coefficient $f$ satisfies certain  monotone condition,
  and the diffusion coefficient satisfies the linear growth condition.
 The authors in \cite{Higham2003Exponential,Schurz2001}
 proved that the backward Euler method and the split-step backward Euler method
  reproduce the exponential mean square stability of the underlying nonlinear problem.
 More recently, some scholars studied nonlinear stability under a coupled condition on the
 drift and diffusion coefficients. This condition allows that the diffusion coefficient
 grows super-linearly. For example, Szpruch and Mao \cite{szpruch2010strong}
 studied the asymptotic stability in this nonlinear
  setting for the STM. Huang
   \cite{Huang2014SDDEs} proved that for all given step size $\Delta t>0$,
    the STM with $\theta\in [1/2, 1]$ is mean square stable for stochastic delay differential equations
    under the following coupled condition:
  \begin{equation}\label{Cond_Nonl}
  	u^{T}f(t,u,v)+\frac 12 \mid g(t,u,v)\mid ^2\leq \widetilde{\alpha} \mid u\mid ^2+ \widetilde{\beta} \mid v\mid ^2\,,	\qquad \forall t>0,~~u, v\in\RR\,,
  \end{equation}
   with $\widetilde{\alpha}+\widetilde{\beta}<0$. If there exist positive constant $K_1$ and $K_2$ such that
   the drift coefficients $f$ also satisfy
   \begin{equation}\label{Cond_Non2}
   \mid f(t,u,v)\mid \leq K_1 \mid u\mid ^2 + K_2 \mid v\mid ^2,
   \end{equation}
   then the STM with $\theta\in [0, 1/2]$ is mean square stable under certain  stepsize constraint.
    For more details for nonlinear stability of numerical method for SDEs we refer to
   \cite{higham2021introduction} and  references therein. 

In recent decades   long   memory processes have been widely studied and applied  by mathematicians and statisticians. In particular,  the theory of  stochastic differential equations driven by fractional Brownian motions have been
well-developed and have found   applications in various fields (e.g. \cite{BHOZ08, Mishura2008book}).
 For example,  the thermal   dynamics   characterized by a fractional
 Ornstein-Uhlenbeck process
 based on  empirical observation in \cite{Weather2002} is applied in the pricing of weather derivatives;  The  arbitrage  in the financial market  is eliminated in the case of geometric fBm in
 \cite{huoksendal03,Guasoni2006FBM} and in the case of fractional Langevin equation in \cite{Guo2013Flangevin}. The readers can also find interesting applications of fBm in modeling anisotropic multidimensional data with self-similarity and long-range dependence in \cite{WD2020} and references therein. 

Motivated by the above works, we are  concerned   with the  mean square stability
 analysis of
\emph{stochastic theta method} for   some  \emph{stochastic test equations }
driven by fractional Brownian motion (fBm) in $\mathbb{R}^{d}$.
We   focus our effort on the stability problem of the numerical scheme and
 try to avoid the complicate issues  of    existence and uniqueness of the solution when $H<1/2$.
  For this reason  we shall assume exclusively $H>1/2$ throughout the paper.  We also assume $d=1$.
First, a  natural choice of the test equation is the extension of \eqref{eq.SDE_B}, namely, we replace the Brownian motion in \eqref{eq.SDE_B} by fractional Brownian motion.
However, an easy computation (similar to  the one shown below) immediately gives that for any parameters $\lambda$ and $\mu$, the solution to $dX(t)= -\lambda   X(t)dt+\mu X(t)dB^{H}(t)$
with a nonzero initial condition $X(0)=x\in \RR\backslash\{0\}$ will  never be  stable in the mean square sense
(or any $L_p$ sense for any finite $p$). So,  the first thing we shall do is to modify \eqref{eq.SDE_B} to the following  new type of     test equations:
\begin{align}\label{eq.SDE_FB}
dX(t)=-\lambda\kappa t^{\kappa-1}  X(t)dt+\mu X(t)dB^{H}(t)\,,\quad t\ge 0\,, ~~~X(0)=X_0\,,
\end{align}
with $\lambda,\mu\in \mathbb{R}$ and  $\kappa\geq 2H$.   {Here, for simplicity  we assume that $X_0$ is a non-zero constant}.  Notice that \eqref{eq.SDE_FB} has an additional   factor $t^{\kappa-1}$ than \eqref{eq.SDE_B} in the drift term.
 By  the chain rule formula (e.g. \cite[Proposition 2.7]{Hu13} or
 \cite[Lemma 2.7.1]{Mishura2008book}), we have $X(t)=X_0\exp(-\lambda t^{\kappa}+\mu B^H(t))$ and hence
\begin{equation}\label{eq.L2_FB}
	\mathbb{E}\mid X(t)\mid ^{2}=\mathbb{E}(X_0)^2\exp\left[2(-\lambda t^{\kappa}+\mu^2t^{2H})\right]\,.
\end{equation}
 This  formula implies  the mean
square stability of the  solution to \eqref{eq.SDE_FB} if
\begin{equation}\label{Cond_MS}
	 {\text{(i)}\ \kappa>2H \text{~and~} \lambda>0 \quad\text{~or~}\quad  \text{(ii)}\ \kappa=2H \text{~and~} -\lambda+\mu^2<0\,.}
\end{equation}
Otherwise, the solution of \eqref{eq.SDE_FB} diverges   in mean square sense as $t$ goes to infinity.   So we only need to consider \eqref{eq.SDE_FB} for the above two parameter regions \eqref{Cond_MS}.

After we obtain the stability result for the above linear equations \eqref{eq.SDE_FB},   we shall next focus our effort on the numerical stability of the STM for the following \emph{nonlinear SDEs} which are  long memory version  of \eqref{gene_SDE}
\begin{align}\label{gene_FSDE}
dX(t)=f(t,X(t))dt+g(t,X(t))dB^{H}(t)\,,
\end{align}
where $B^{H}(t)$ is  fractional Brownian motion (fBm) with Hurst parameter $H> 1/2$.
Inspired by the conditions \eqref{Cond_Nonl}, \eqref{Cond_Non2} and \eqref{Cond_MS}, we shall assume the coefficients in the SDE \eqref{gene_FSDE} satisfy the following conditions (we assume $d=1$):
\begin{Assu}\label{gene-assum}
There exist constants $\kappa\geq 2H$, $\lambda>0$, $\bar{\lambda}>0$ and $\mu>0$ such that for any $t>0$ and $x\in\RR$
\begin{align}
\textbf{Monotone condition}: \qquad &xf(t,x)\leq -\lambda \kappa t^{\kappa-1} x^2\,,   \label{Lip_f}\\
\textbf{Linear growth}: \qquad &\mid f(t,x)\mid \leq \bar{\lambda} \kappa t^{\kappa-1} \mid x\mid \,, \label{LG_f}\\
\textbf{Uniform linear growth}: \qquad &\mid g(t,x)\mid \leq \mu \mid x \mid \,.  \label{LG_g}
\end{align}
\end{Assu}

\begin{Rem}
We mention that when $\kappa=1$, conditions \eqref{Lip_f} and \eqref{LG_f} reduce to the classical
 monotone condition and linear growth condition (which is discussed in the Brownian motion  case, e.g. \cite{Higham2003Exponential}).
\end{Rem}

 We mention that
 it  is a difficult problem 
  to give the long-time stability analysis of the
    solution of \eqref{gene_FSDE} under Assumption  \ref{gene-assum}.
  there are only  few results about  the moment bounds  of the solution $X(t)$. For example,  the moment bounds is given in   \cite{HN2007}  when $f(t,X)=0$ and $g(t,X)=\sigma(X)$. {More recently, Fan and Zhang \cite{FZ2021}   obtained the moment bounds  with irregular  drift term.
 We shall  show that  under the condition \eqref{Lip_f} and \eqref{LG_f}  and when  $g(t,X(t))=c(t)X(t)$,  the solution $X(t)$ of \eqref{gene_FSDE} is mean square stable. 
 \begin{Rem}We believe that this stability  result is new.
  If we there is a bijection function $h(t,\cdot) :\RR\to \RR$ 
  such that $ g(t,x) \frac{\partial }{\partial x} h(t, x)= c(t) h(t, x)$ and $Y_t=h(t, X_t)$. Then the chain rule formula  yields 
\begin{eqnarray*} 
 dY_t&=& \frac{\partial }{\partial x} h(t, X_t) f(t, X_t)dt+
 \frac{\partial }{\partial x} h(t, X_t) g(t, X_t)dB^H(t)\\
 &=& \tilde f(t,Y_t)dt+c(t) Y(t) dB^H(t)\,,  
  \end{eqnarray*} 
  where $\tilde f(t,y)=(\frac{\partial }{\partial x} h)(t, h^{-1}(t,y)) f(t, h^{-1}(t,y))$  with $h^{-1}(t,x)$ denoting the inverse function
  $h(t,\cdot) :\RR\to \RR$.  So, we may sometimes reduce the general equation to our case. But we are not pursuing this direction in current  work. 
  \end{Rem} 
 On the other hand,  from the numerical viewpoint, we investigate the stability of the STM for the general equation \eqref{gene_FSDE}
 rather than \eqref{Sim_SDE} based on Assumption \ref{gene-assum}. We hope the numerical results would  also 
 provide some insights for the theoretical stability  analysis of the solution to \eqref{gene_FSDE}. 

The numerical scheme that we propose  to study is the stochastic theta method (STM) to \eqref{gene_FSDE},
which is some kind of implicit-explicit $\theta$ Euler-Maruyama scheme:
\begin{empheq}[left=(\text{STM})\empheqlbrace]{align}\label{STM}
X_{n+1}&=X_{n}+ \theta f(t_{n+1},X_{n+1})\Delta t
+(1-\theta) f(t_n,X_n)\Delta t + g(t_n,X_n)V_{n}^H, \\
&\hbox{where \quad  $t_{n}=n\cdot \Delta t$ \quad and \quad   $V_{n}^{H}=B^{H}(t_{n+1})-B^{H}(t_{n})$,  }\nonumber \\
&\qquad \qquad \hbox{$n=0, 1, 2, \cdots$ \quad and \quad $\Delta t>0$ is fixed stepsize.}
\nonumber
\end{empheq}
In particular, when $f(t,X)=-\lambda\kappa t^{\kappa-1}X$ and $g(t,x)=\mu X$, \eqref{STM} becomes
\begin{equation}\label{STM_linear}
	X_{n+1}=X_{n}-\kappa\lambda \theta\cdot (t_{n+1})^{\kappa-1}X_{n+1}\Delta t-\kappa\lambda (1-\theta)\cdot (t_n)^{\kappa-1} X_n \Delta t+\mu\cdot X_n V_{n}^{H},
\end{equation}
The main stability theorems we shall prove are displayed as follows:
\begin{Theo}\label{Thm_1}
Let $\Delta t>0$ be fixed and let {$\lambda$, $\mu$ satisfy  \eqref{Cond_MS}}. For the test equation \eqref{eq.SDE_FB} and the STM \eqref{STM_linear}  we have the following  statements.
\begin{enumerate}
\item[(i)] If $\kappa\geq 2H$ and $\frac{\sqrt{3/2}\cdot e}{\sqrt{3/2}\cdot e+1}\leq \theta\leq 1$, then
the STM \eqref{STM_linear} is  mean square stable for the   test equation  \eqref{eq.SDE_FB}, namely, $\displaystyle \lim_{n\to \infty} \EE \mid X_n\mid ^2=0$.
\item[(ii)] If $\kappa> 3/2$ and $\frac{1}{2}< \theta\leq 1$, then
the STM \eqref{STM_linear} is  mean square stable for the   test equation  \eqref{eq.SDE_FB}.
\item[(iii)] If $\kappa\geq 2H$ and $0<\theta<\frac 1 2$, then the STM \eqref{STM_linear} is \emph{not unconditionally} mean square stable  for  the test equation  \eqref{eq.SDE_FB}.
 \end{enumerate}
\end{Theo}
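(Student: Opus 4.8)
The plan is to analyze the recursion \eqref{STM_linear} directly. Writing $a_{n+1} = 1 + \kappa\lambda\theta (t_{n+1})^{\kappa-1}\Delta t > 0$ and $b_n = 1 - \kappa\lambda(1-\theta)(t_n)^{\kappa-1}\Delta t$, the scheme reads $a_{n+1} X_{n+1} = b_n X_n + \mu X_n V_n^H$. Since $V_n^H = B^H(t_{n+1}) - B^H(t_n)$ is a centered Gaussian with variance $\sigma_n^2 := \EE|V_n^H|^2 = \Delta t^{2H}$ (translation invariance of the fBm increments), squaring and taking expectations — but being careful that $X_n$ is \emph{not} independent of $V_n^H$, unlike the Brownian case — forces us to keep track of the full Gaussian structure. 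I would therefore unwind the recursion completely: $X_n = X_0 \prod_{k=0}^{n-1} \frac{b_k + \mu V_k^H}{a_{k+1}}$, so that
\begin{equation*}
\EE|X_n|^2 = X_0^2 \, \EE \prod_{k=0}^{n-1} \frac{(b_k + \mu V_k^H)^2}{a_{k+1}^2}.
\end{equation*}
The deterministic factor $\prod a_{k+1}^{-2}$ comes out, and the main object is $\EE \prod_{k=0}^{n-1} (b_k + \mu V_k^H)^2$, the second moment of a product of correlated Gaussians, which can be expanded into a sum over pairings (Wick/Isserlis) or bounded via hypercontractivity and the covariance matrix of $(V_0^H,\dots,V_{n-1}^H)$ — this is where the confluent hypergeometric functions advertised in the keywords presumably enter.

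For part (i), I would show that the \emph{deterministic} damping $\prod_{k=0}^{n-1} a_{k+1}^{-2}$ decays fast enough to kill the growth of the Gaussian product, uniformly in $\Delta t$. Using $\log a_{k+1} \ge \frac{\kappa\lambda\theta (t_{k+1})^{\kappa-1}\Delta t}{1 + \kappa\lambda\theta(t_{k+1})^{\kappa-1}\Delta t}$ and summing, $\sum_{k} \log a_{k+1}$ behaves like $t_n^{\kappa}$ (a Riemann-sum comparison with $\int_0^{t} \kappa\lambda\theta s^{\kappa-1}\,ds = \lambda\theta t^\kappa$), so $\prod a_{k+1}^{-2} \lesssim \exp(-2\lambda\theta\, t_n^\kappa)$ modulo lower-order corrections. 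On the stochastic side, $\EE\prod (b_k+\mu V_k^H)^2 \le \EE \exp\big(2\sum_k \log|b_k + \mu V_k^H|\big)$; the terms with $t_k$ large have $b_k<0$ but $|b_k|$ grows like $\kappa\lambda(1-\theta)t_k^{\kappa-1}\Delta t$, and the key point is that the growth rate of $2\sum \log|b_k|$ is governed by $\lambda(1-\theta)t_n^\kappa$ (same Riemann-sum comparison), which is \emph{strictly smaller} than the damping rate $2\lambda\theta t_n^\kappa$ exactly when $2\theta > (1-\theta) + (\text{Gaussian excess})$; the constant $\frac{\sqrt{3/2}\,e}{\sqrt{3/2}\,e+1}$ is the threshold making this inequality hold after one pays for the variance of $\mu V_k^H$ relative to $|b_k|$ and for $\EE\log|b_k+\mu V_k^H| - \log|b_k|$, where the $\sqrt{3/2}$ and $e$ arise from bounding $\EE|1+cZ|^2$ and $\EE\log^+|Z|$-type quantities for a standard Gaussian $Z$ via the $\kappa\le 3/2$ restriction on the exponent. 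Part (ii) is the regime $\kappa>3/2$ where the drift dominates so overwhelmingly (the exponent $t_n^\kappa$ grows faster than any fixed power showing up in the noise bookkeeping) that any $\theta>1/2$ — which merely guarantees $a_{k+1}^{-2}$ wins the \emph{leading} balance against the $t_n^{2(\kappa-1)}\Delta t^2$-type terms — suffices; here I would redo the same product estimate but note that the margin $(2\theta-1)\lambda t_n^\kappa$ in the exponent dominates the $O(t_n^{2\kappa-2}\Delta t)$ noise contribution for \emph{all} $\Delta t$ once $\kappa > 3/2$ makes $\kappa > 2(\kappa - 1)$ fail the other way... more precisely, one uses $a_{k+1} \ge \kappa\lambda\theta t_{k+1}^{\kappa-1}\Delta t$ for large $k$ so that $\frac{|b_k+\mu V_k^H|}{a_{k+1}} \le \frac{1-\theta}{\theta} + \frac{|\mu V_k^H|}{\kappa\lambda\theta t_{k+1}^{\kappa-1}\Delta t}$ with the second term summable-in-expectation, and $\frac{1-\theta}{\theta}<1$ gives geometric decay.

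For part (iii), the claim is only that STM is \emph{not unconditionally} stable for $0<\theta<1/2$, so it suffices to exhibit, for each such $\theta$, a choice of parameters $(\lambda,\mu,\kappa)$ satisfying \eqref{Cond_MS} and a stepsize $\Delta t$ for which $\EE|X_n|^2 \not\to 0$. I would take the simplest case $\kappa = 2H$, $\mu$ tuned against $\lambda$ so the \emph{continuous} solution is (borderline) stable, and pick $\Delta t$ large: then for $\theta<1/2$ the coefficient $b_n = 1 - \kappa\lambda(1-\theta)t_n^{\kappa-1}\Delta t$ becomes large and negative while $a_{n+1} = 1+\kappa\lambda\theta t_{n+1}^{\kappa-1}\Delta t$ is only moderately large because $\theta$ is small, so the ratio $|b_n/a_{n+1}| \to \frac{1-\theta}{\theta} > 1$, and already the deterministic part of the product diverges; throwing in the nonnegative contribution $\EE(b_n+\mu V_n^H)^2 \ge b_n^2$ only helps. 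A clean way to make this rigorous is the Jensen/Fatou lower bound $\EE|X_n|^2 = X_0^2\,\EE\prod \frac{(b_k+\mu V_k^H)^2}{a_{k+1}^2} \ge X_0^2 \prod \frac{b_k^2 + \mu^2\sigma_k^2}{a_{k+1}^2}$ (using $\EE(b_k+\mu V_k^H)^2 = b_k^2 + \mu^2\Delta t^{2H}$ after conditioning appropriately, or directly since squaring a sum of one Gaussian term) and then showing the right-hand product $\to\infty$ for the chosen large $\Delta t$. The main obstacle throughout is part (i): controlling $\EE\prod_{k<n}(b_k+\mu V_k^H)^2$ — a high-dimensional correlated-Gaussian moment — sharply enough that the threshold constant comes out as stated rather than as some weaker sufficient condition; this is presumably where the Gaussian correlation inequality is invoked to decouple the positive-definite covariance of the fBm increments, and where a law-of-large-numbers argument turns $\frac{1}{n}\sum_k \log|b_k + \mu V_k^H|$ into its almost-sure/expected limit so the exponential rates can be compared cleanly.
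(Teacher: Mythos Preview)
Your outline has the right coarse shape (unwind the recursion, separate deterministic from stochastic factors, compare exponential rates), but the actual mechanisms you propose do not match the paper and in one place are genuinely incorrect.

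\textbf{Part (iii).} Your lower bound
\[
\EE\prod_{k}\frac{(b_k+\mu V_k^H)^2}{a_{k+1}^2}\ \ge\ \prod_{k}\frac{b_k^2+\mu^2\Delta t^{2H}}{a_{k+1}^2}
\]
is not justified and is in general false for correlated Gaussians with nonzero means. Already for $n=2$ one computes
\[
\EE[Z_1^2Z_2^2]-\EE[Z_1^2]\,\EE[Z_2^2]=4b_1b_2\,c_{12}+2c_{12}^2,\qquad c_{12}=\mu^2\,\EE[V_1^HV_2^H]>0,
\]
which is negative whenever $b_1b_2<0$ and $2|b_1b_2|>c_{12}$; since $b_0=1>0$ while $b_k<0$ for large $k$, this sign obstruction actually occurs. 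The paper avoids this entirely: it sets $Y_k=\log(\alpha_k+\beta_kV_k^H)^2$, verifies the covariance decay $|\mathrm{Cov}(Y_i,Y_j)|\lesssim|i-j|^{2H-2}$ by a direct Gaussian density computation, invokes a strong law of large numbers for weakly dependent sequences to get $\frac1n\sum Y_k\to\log\big(\tfrac{1-\theta}{\theta}\big)^2>0$ almost surely, and then applies Fatou. So the SLLN you mention at the end is used for (iii), not for (i).

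\textbf{Parts (i) and (ii).} You correctly identify the main obstacle as controlling $\EE\prod_k(b_k+\mu V_k^H)^2$, but your proposed route (take logs, Riemann-sum the rates, appeal to hypercontractivity or Wick) stays at the heuristic level and does not produce the constant $\sqrt{3/2}\,e$; your explanation of where that constant comes from is off. The paper's argument for (i) is quite specific: it applies a generalized polarization identity to write $\prod Z_k^2$ as a signed sum of $2n$-th powers of linear combinations, bounds each $2n$-th moment of a Gaussian via the raw-moment formula in terms of the confluent hypergeometric function $\Phi(n+\tfrac12,\tfrac12,\cdot)$, proves a monotonicity property of this $\Phi$ via parabolic cylinder functions, and after Stirling obtains the estimate $\asymp (\sqrt{3/2}\,e)^{2n}\big(\tfrac{1-\theta}{\theta}\big)^{2n}$. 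The $3$ comes from the $3^n$ terms in the polarization sum (each $v_i\in\{0,1,2\}$) and the $e$ from Stirling's formula. For (ii) the paper does \emph{not} argue by rate comparison; it splits the product at $p(n)=n/p$, uses the AM--GM inequality on the tail block, and---this is where the Gaussian correlation inequality is actually used---bounds $\mathbb{P}\{Z_k\le 0:\ p(n)<k\le n\}$ from below by a product of one-dimensional concentration probabilities, so that the complementary event has exponentially small probability dominating the crude $M^{2n}$ bound on the fourth moment of the full product. Your guess that GCI is used in (i) and SLLN in (i) has the tools right but the parts wrong.
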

\begin{Rem}
We are not clear whether or not
the STM \eqref{STM_linear} is  mean square stable when   $2H\leq \kappa\leq \frac 32$ and $\frac 12\leq \theta<\frac{\sqrt{3/2}\cdot e}{\sqrt{3/2}\cdot e+1}$,   which will be a topic for  future research.
\end{Rem}

\begin{Theo}\label{Thm_2}
Let $\Delta t>0$ be fixed and let {$\lambda$, $\mu$ in Assumption \ref{gene-assum} satisfy  \eqref{Cond_MS}}. 
 For the SDEs with fBm \eqref{gene_FSDE} and 
 the STM \eqref{STM} we have the following statement. 
\begin{enumerate}
\item[(i)] If  \eqref{Lip_f} and \eqref{LG_g} in Assumption \ref{gene-assum} hold, then
the STM \eqref{STM} with $\theta=1$ (i.e., the backward Euler method) is  mean square stable for
 the  equation \eqref{gene_FSDE}. 
\item[(ii)] If \eqref{Lip_f},  \eqref{LG_f} and \eqref{LG_g}  in Assumption \ref{gene-assum} hold, 
then the STM \eqref{STM} with $\frac{\sqrt{6}e\bar{\lambda}/\lambda}{\sqrt{6}e\bar{\lambda}/\lambda+1}\leq \theta <1$ 
  is  mean square stable for the  equation \eqref{gene_FSDE}. 
\end{enumerate}
\end{Theo}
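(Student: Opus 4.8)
\medskip

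The plan is to work directly with the STM recursion \eqref{STM} and track the growth of $\EE\,X_n^2$ by expanding the square and using Assumption \ref{gene-assum}. For part (i), with $\theta=1$ the scheme reads $X_{n+1} = X_n + f(t_{n+1},X_{n+1})\Delta t + g(t_n,X_n)V_n^H$. I would first rearrange this as $X_{n+1} - f(t_{n+1},X_{n+1})\Delta t = X_n + g(t_n,X_n)V_n^H$, square both sides, and expand the left side to get $X_{n+1}^2 - 2X_{n+1}f(t_{n+1},X_{n+1})\Delta t + (f(t_{n+1},X_{n+1})\Delta t)^2$. The monotone condition \eqref{Lip_f} makes the middle term nonnegative, so $X_{n+1}^2 \le (X_n + g(t_n,X_n)V_n^H)^2 = X_n^2 + 2X_n g(t_n,X_n)V_n^H + g(t_n,X_n)^2 (V_n^H)^2$. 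Taking expectations: the cross term does \emph{not} vanish because $X_n$ and $V_n^H$ are correlated through the long memory of $B^H$ — this is exactly where the Brownian machinery breaks. So the key step is to control $\EE\,[X_n g(t_n,X_n)V_n^H]$; using \eqref{LG_g} and Cauchy--Schwarz this is bounded by $\mu\,\EE\,[X_n^2 |V_n^H|]$, and then one must estimate $\EE\,[X_n^2 |V_n^H|]$ in terms of $\EE\,X_n^2$ and $\EE\,X_{n-1}^2,\dots$, exploiting that the increment $V_n^H$ looking forward is weakly correlated with the ``far past'' of the path. I expect the paper establishes an auxiliary estimate (presumably via the Gaussian correlation inequality and a law-of-large-numbers argument, as the keywords suggest) of the form $\EE\,[X_n^2|V_n^H|] \le C_n \EE\,X_n^2$ with $\sum C_n$ or $\prod(1+C_n\Delta t+\dots)$ controllable; combined with the telescoping of the recursion and the fact that the deterministic drift forces a factor like $\exp(-2\lambda t_n^\kappa)$-type decay when $\theta=1$ (the implicit term gives a genuine contraction $1/(1+\lambda\kappa\theta t_{n+1}^{\kappa-1}\Delta t)$ per step in the linear model, and the monotone condition gives the analogue here), one gets $\EE\,X_n^2 \to 0$.

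\medskip

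For part (ii), with $\theta<1$ the implicit term no longer absorbs the full drift, so I would split $f$ as the ``good'' implicit part $\theta f(t_{n+1},X_{n+1})\Delta t$ and the ``bad'' explicit part $(1-\theta)f(t_n,X_n)\Delta t$, and this time I need the linear growth bound \eqref{LG_f} on the explicit piece: $|(1-\theta)f(t_n,X_n)\Delta t| \le (1-\theta)\bar\lambda\kappa t_n^{\kappa-1}|X_n|\Delta t$. Writing $X_{n+1} - \theta f(t_{n+1},X_{n+1})\Delta t = X_n + (1-\theta)f(t_n,X_n)\Delta t + g(t_n,X_n)V_n^H$, squaring, and using \eqref{Lip_f} on the implicit term as before, one reduces to estimating $\EE$ of the square of the right-hand side. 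The three squared terms and three cross terms must be balanced against the contraction coefficient coming from the implicit monotone term. The condition $\theta \ge \frac{\sqrt6\,e\bar\lambda/\lambda}{\sqrt6\,e\bar\lambda/\lambda + 1}$ is precisely what is needed so that $(1-\theta)\bar\lambda$ is small enough relative to $\theta\lambda$ (with the constant $\sqrt6\,e$ presumably arising from the same fBm-increment estimate that produces the $\sqrt{3/2}\,e$ in Theorem \ref{Thm_1}; note $\sqrt6 = 2\sqrt{3/2}$) that the per-step amplification factor stays strictly below a summable/decaying threshold. I would mirror the structure of the proof of Theorem \ref{Thm_1}(i) as closely as possible, since that linear case is the template.

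\medskip

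Concretely, the steps in order are: (1) rearrange the STM to isolate the implicit term and square; (2) invoke \eqref{Lip_f} to drop the implicit cross term favorably, obtaining $(1+a_n)X_{n+1}^2 \le (\text{RHS})^2$ with $a_n = a_n(\theta,\lambda,\kappa,t_{n+1})\ge 0$; (3) expand $(\text{RHS})^2$ and take expectations, producing terms $\EE\,X_n^2$, $\EE\,[X_n^2|V_n^H|]$, $\EE\,[X_n^2(V_n^H)^2]$, and — for (ii) — $\EE\,[X_n^2 t_n^{\kappa-1}\Delta t]$; (4) prove or cite the crucial conditional-type estimate bounding $\EE\,[X_n^2|V_n^H|]$ and $\EE\,[X_n^2(V_n^H)^2]$ by (constants times) $\EE\,X_n^2$ plus lower-order history terms, using the Gaussian correlation inequality; (5) assemble a recursive inequality $\EE\,X_{n+1}^2 \le q_n\,\EE\,X_n^2$ (or a finite-memory version involving $\EE\,X_{n-k}^2$) and show $\prod_{n} q_n \to 0$ under the stated $\theta$-range, which amounts to checking $\sum_n (1-q_n) = \infty$ or that $q_n$ is eventually bounded below $1$ while the drift contraction $a_n$ (growing like $t_n^{\kappa-1}$) dominates the $\mu$-terms. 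The main obstacle is unquestionably step (4): replacing the It\^o-isometry/martingale orthogonality — which kills the cross terms instantly in the Brownian case — with a genuine estimate on $\EE\,[X_n^2|V_n^H|]$, where $X_n$ depends on the \emph{entire} correlated past of $B^H$. Everything else is bookkeeping on the recursion; the fBm correlation estimate is the technical heart, and I expect it is where the Gaussian correlation inequality and a strong law of large numbers for $B^H$-functionals enter.
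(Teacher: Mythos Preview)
Your steps (1)--(2) match the paper: rearrange to $X_{n+1}-\theta f(t_{n+1},X_{n+1})\Delta t = X_n+(1-\theta)f(t_n,X_n)\Delta t+g(t_n,X_n)V_n^H$, square, and use \eqref{Lip_f} to get $(1+\theta\lambda\kappa t_{n+1}^{\kappa-1}\Delta t)^2 X_{n+1}^2$ as a lower bound for the left side. For the right side the paper uses the crude inequalities $(a+b)^2\le 2(a^2+b^2)$ (for $\theta=1$) or $(a+b+c)^2\le 3(a^2+b^2+c^2)$ (for $\theta<1$, invoking \eqref{LG_f}) to reach a \emph{pathwise} recursion $X_n^2\le C\,[\alpha_n^2+\beta_n^2(V_n^H)^2]\,X_{n-1}^2$ with $C\in\{2,3\}$.

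The gap is in your steps (3)--(5). You take expectations step by step and aim for $\EE X_{n+1}^2\le q_n\,\EE X_n^2$, which forces you to bound $\EE[X_n^2(V_n^H)^2]$ in terms of $\EE X_n^2$; you correctly flag this as the crux, but the tools you name do not deliver it. The Gaussian correlation inequality (Lemma \ref{GCI}) concerns probabilities of symmetric slabs, not mixed moments, and the SLLN (Lemma \ref{SLLN}) gives a.s.\ limits; in the paper these are used only for parts (ii) and (iii) of Theorem \ref{Thm_1}, not here. The paper instead iterates the pathwise inequality \emph{first}, obtaining $X_n^2\le C^n X_0^2\prod_{j=1}^n[\alpha_j^2+\beta_j^2(V_j^H)^2]$, and only then takes expectation of the full product. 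The key device is the complex factorization $\alpha_j^2+\beta_j^2(V_j^H)^2=(\beta_jV_j^H+\iota\alpha_j)(\beta_jV_j^H-\iota\alpha_j)$ followed by the generalized polarization identity (Lemma \ref{Polarization}) with all $s_i=1$, which converts the $2n$-fold product into a sum of $(2n)$-th powers of \emph{linear} forms in the $V_j^H$; these are Gaussian moments controlled via the variance estimate of Appendix A and Stirling's formula. The constants $2$ or $3$ from the square inequality combine with this combinatorics to produce exactly the $\sqrt{6}\,e\,\bar\lambda/\lambda$ threshold in part (ii). So the proof is the Theorem \ref{Thm_1}(i) machinery (polarization, Gaussian moments, Stirling) adapted via complex factorization, not the conditional-expectation/GCI route you sketch.
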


We shall prove  Theorems \ref{Thm_1} and  \ref{Thm_2} in Sections 2 and   3, respectively.
Before we end this section we would  point out the new  difficulties we encounter  compared with the classical
Brownian motion (e.g. see subsection 2.5).  We can write  \eqref{STM_linear}  as
\begin{equation}
X_{n+1} =\,\left(\frac{1-\kappa(1-\theta)\lambda (t_n)^{\kappa-1} \Delta t}{1+\kappa\theta\lambda (t_{n+1})^{\kappa-1}\Delta t}
+\frac{\mu V_n^{H}}{1+\kappa\theta\lambda (t_{n+1})^{\kappa-1}\Delta t}\right)X_{n} \,.
\end{equation}

%
When $H=1/2$ (i.e. the Brownian motion case), $X_{n+1}$ is the product of independent variables and the corresponding computation is  much easier.
 However, this is no longer true    in our fBm setting.
 We   encounter two major  difficulties:
 \begin{enumerate}
   \item The increments   $B^{H}(t_{n+1})-B^{H}(t_{n})$ of the fractional Brownian motion 
 depend  on the past history, which makes the stability analysis much more sophisticated.
   \item The fractional Brownian motion lacks   martingale property or Markov property so that some useful techniques such as conditional expectation seems impossible or at least over-sophisticated.  
 \end{enumerate}
To get around these difficulties we shall  employ some other analysis and  computation techniques.
In fact, in the proof of different parts of Theorem \ref{Thm_1}, we 
shall use different techniques.  For example, 
in the proof of part (i) of Theorem \ref{Thm_1}  we use  the technique
  of   generalized polarization, raw moments formula of Gaussian distributions and the asymptotic properties of
 confluent hypergeometric function.  On the other hand, 
 the main tool to prove   part (ii) is the celebrated Gaussian
 correlation inequality. Finally, the statement  of part (iii) is proved through  the
 strong law of large numbers of dependent random variables.
All of these are  done in   Section $2$.
Let us mention that the test equation \eqref{eq.SDE_FB} has not been previously studied even  when the fBm is replaced by the standard Brownian motion
  and it is interesting to carry out  the stability  analysis of  the corresponding stochastic theta scheme for its own sake and for the comparison purpose.
   This is  also done in   Section $2$.  
The   proof of Theorem \ref{Thm_2} is analogous to that of  part (i) of Theorem \ref{Thm_1} and is  provided in Section $3$. 
  In Section $4$,  some  numerical simulations are presented to validate  our
theoretical results. Finally,  some concluding remarks are given in the last section.

\section{STM: Mean square linear stability analysis}
In this section we shall prove our main result, i.e.,  Theorem \ref{Thm_1}.
The parts (i), (ii) and (iii) are  proved in subsection \ref{subsection 2.2}, \ref{subsection 2.3} and \ref{subsection 2.4}, respectively.

 Obviously,
\eqref{STM_linear} is equivalent to the following recurrent equation 
\begin{align}
X_{n+1}&= \left(\alpha_{n}(\theta,\lambda,\Delta t)+\beta_{n}(\theta,\lambda,\mu,\Delta t)V_{n}^{H}\right) X_{n}\,, \nonumber
\end{align}
where $\kappa\geq 2H>1$ and
\begin{empheq}[left=\empheqlbrace]{align}
&\alpha_{n}(\theta,\lambda,\Delta t)=\, \frac{1-\kappa(1-\theta)\lambda (t_n)^{\kappa-1} \Delta t}{1+\kappa\theta\lambda (t_{n+1})^{\kappa-1}\Delta t}=\frac{1-\kappa(1-\theta)\lambda n^{\kappa-1} \Delta t^{\kappa}}{1+\kappa\theta\lambda (n+1)^{\kappa-1}\Delta t^{\kappa}}\,, \label{e.2.1}\\
&\beta_{n}(\theta,\lambda,\mu,\Delta t)=\, \frac{\mu}{1+\kappa\theta\lambda (t_{n+1})^{\kappa-1}\Delta t}=\frac{\mu}{1+\kappa\theta\lambda (n+1)^{\kappa-1}\Delta t^{\kappa}}\,.
 \label{e.2.2}
 \end{empheq}
For notational simplicity, throughout the remaining part of the paper we denote $\alpha_{n}(\theta,\lambda,\Delta t)$,
$\beta_{n}(\theta,\lambda,\mu,\Delta t)$ by $\alpha_{n}$ and $\beta_{n}$, respectively.  {Note that \eqref{e.2.1} and \eqref{e.2.2} are well defined if we require the condition \eqref{Cond_MS} or otherwise the denominators in the expressions of  $\alpha_{n}$ and $\beta_{n}$ could be $0$.}

\subsection{Heuristic arguments}
  Before the   proof, we would like to   explain why Theorem \ref{Thm_1} could  hold true heuristically, namely,  why   the STM
  \eqref{STM_linear} is    stable  when $\theta>1/2$ and is unstable when $\theta<1/2$, formally.  Denote
\begin{align}
Z_{n}(\Delta t)=\alpha_{n}+\beta_{n}V_{n}^{H}\,.\label{e.2.3}
\end{align}
Then we have
\begin{align}
X_{n+1}=X_{0}\prod_{k=0}^{n}Z_{k}(\Delta t)=X_{0}\prod_{k=0}^{n}\left(\alpha_{k}
+\beta_{k}V_{k}^{H}\right)\,. \label{q4}
\end{align}
 Obviously, for fixed $\Delta t$, $\lambda$ and $\mu$,
$$
\lim_{n\to \infty}\alpha_{n}=-\frac{1-\theta}{\theta},~~~~~~
\lim_{n\to \infty}\beta_{n}=0.
$$
Notice that this is quite different than the setting with $H=1/2$ where $\alpha_n$ and $\beta_n$ do not depend on $n$ because of the absence of $(t_n)^{\kappa-1}$ for $\kappa=2H=1$ (see Section 3 for more details). Formally,   if we could  think   $\{V_{k}^{H}\}$ in \eqref{q4} as a sequence of finite numbers,  then by the limits of $\alpha_n$ and $\beta_n$,
we would have
\begin{equation*}
	\mid X_{n+1}\mid ^2=\mid X_{0}\mid ^2\prod_{k=0}^{n}\left(\alpha_{k}
+\beta_{k}V_{k}^{H}\right)^2 \asymp  \left(\frac{1-\theta}{\theta}\right)^{2n}\to\begin{cases}
		0\,, &\text{if~} \frac{1}{2}<\theta\leq 1\,;\\
		\infty\,, &\text{if~} 0\leq \theta< \frac 1 2\,,
	\end{cases}
\end{equation*}
where and through the remaining part of this paper, we use
$a_n\asymp b_n$ to denote that there are two positive constants $c_1$ and $c_2$, independent of  $n$,  such that $c_1a_n\le b_n\le c_2 a_n$  for all $n\ge 1$.

However, the random variables  $\{V_{k}^{H}\}$ in our setting are not uniformly bounded. Even worse, they are long range dependent. Therefore, the  above heuristic  argument   cannot  be applied directly to analyze  \eqref{q4}, especially for the scenario of (mean square) stability.  Presumably,   there are two ways to break   these barriers.
\begin{enumerate}
	\item[(1)] Choose $\theta$ carefully so that the oscillation caused by $\{V_{k}^{H}\}$ can still be manageable.
	\item[(2)] Take $\kappa$   sufficiently large so that $\beta_n \cdot V_{k}^{H}$ converges to $0$ fast enough so that  influences of $\{V_{k}^{H}\}$ can be neglected.
\end{enumerate}
Our proof will follow these spirits but with much more sophisticated  tricks and computations. For example, we need to use the asymptotics of the confluent hypergeometric functions which comes from the moments of Gaussian variables.


\subsection{The case of $\kappa\geq 2H$ and $\frac{\sqrt{3/2}\cdot e}{\sqrt{3/2}\cdot e+1}\leq \theta\leq 1$} \label{subsection 2.2}
In this subsection we prove part (i) of the main theorem, namely, we consider the case
when  $\kappa\geq 2H$ and $\frac{\sqrt{3/2}\cdot e}{\sqrt{3/2}\cdot e+1}\leq \theta\leq 1$. Firstly, we state a  useful lemma, which  is a generalization of polarization identity.
\begin{Lem}{\cite[Lemma 1]{Kan2008}}\label{Polarization}
	Let $x_1,\dots,x_n$ be real numbers, and let $s_1,\dots,s_n$ be nonnegative integers and $s=\sum_{i=1}^n s_i$. Then, we have
	\[
	 x_1^{s_1}\cdots x_n^{s_n}=\frac{1}{s!}\sum\limits_{v_1=0}^{s_1}\cdots\sum\limits_{v_n=0}^{s_n}(-1)^{\sum_{i=1}^{n}v_i} {s_1 \choose v_1}\cdots {s_n \choose v_n}\cdot \lk \sum\limits_{i=1}^n h_i x_i\rk^s\,,
	\]
where  $h_i=s_i/2-v_i$.
\end{Lem}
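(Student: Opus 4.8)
The plan is to prove this purely combinatorial identity by expanding the $s$-th power on the right-hand side with the multinomial theorem and then collapsing the alternating sums over $v_1,\dots,v_n$ one coordinate at a time. First I would substitute $h_i=s_i/2-v_i$ and write
\[
\Bigl(\sum_{i=1}^n h_i x_i\Bigr)^s=\sum_{|\alpha|=s}\binom{s}{\alpha}\prod_{i=1}^n h_i^{\alpha_i}x_i^{\alpha_i},
\]
where $\alpha=(\alpha_1,\dots,\alpha_n)$ runs over multi-indices with $\sum_i\alpha_i=s$ and $\binom{s}{\alpha}=s!/(\alpha_1!\cdots\alpha_n!)$. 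Interchanging the finite sums, the right-hand side of the claimed identity becomes
\[
\frac{1}{s!}\sum_{|\alpha|=s}\binom{s}{\alpha}\Bigl(\prod_{i=1}^n x_i^{\alpha_i}\Bigr)\prod_{i=1}^n\Bigl[\,\sum_{v_i=0}^{s_i}(-1)^{v_i}\binom{s_i}{v_i}\bigl(\tfrac{s_i}{2}-v_i\bigr)^{\alpha_i}\,\Bigr].
\]
Thus everything reduces to evaluating, for each $i$, the one-dimensional sum $S(s_i,\alpha_i):=\sum_{v=0}^{s_i}(-1)^v\binom{s_i}{v}(s_i/2-v)^{\alpha_i}$.

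Second, I would invoke (or quickly prove via forward differences) the elementary fact that, for any real number $a$ and nonnegative integers $m,m'$,
\[
\sum_{v=0}^{m'}(-1)^v\binom{m'}{v}(a-v)^m=
\begin{cases}
0,&\text{if }0\le m<m',\\
m'!,&\text{if }m=m',
\end{cases}
\]
which is just the statement that the $m'$-th finite difference annihilates polynomials of degree $<m'$ and returns $m'!$ times the leading coefficient on a polynomial of degree exactly $m'$ (here $(a-v)^m$ has degree $m$ in $v$ and, when $m=m'$, leading coefficient $(-1)^{m'}$, whose sign cancels against the $(-1)^{m'}$ produced by reversing the alternating sum); note that the shift $a$, in our case $a=s_i/2$, plays no role. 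Consequently $S(s_i,\alpha_i)=0$ whenever $\alpha_i<s_i$ and $S(s_i,s_i)=s_i!$.

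Third, I would observe that the only term of the multi-index sum that survives is $\alpha=(s_1,\dots,s_n)$: indeed, if $\alpha\ne(s_1,\dots,s_n)$ then, because $\sum_i\alpha_i=\sum_i s_i$, there must be some index $j$ with $\alpha_j<s_j$, and the vanishing factor $S(s_j,\alpha_j)=0$ kills that term. This is the one place where the constraint $|\alpha|=s$ is genuinely used, as it excludes configurations with $\alpha_i>s_i$ for which $S$ need not vanish. Substituting $\alpha=(s_1,\dots,s_n)$ back in leaves exactly
\[
\frac{1}{s!}\cdot\frac{s!}{s_1!\cdots s_n!}\cdot\Bigl(\prod_{i=1}^n x_i^{s_i}\Bigr)\cdot\prod_{i=1}^n s_i!=\prod_{i=1}^n x_i^{s_i},
\]
which is the left-hand side. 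The only real obstacle is the finite-difference identity of the second step, and that is classical; the rest is bookkeeping, so I would keep it brief.
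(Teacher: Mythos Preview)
Your proof is correct. The paper itself does not give a proof of this lemma at all; it is simply quoted from \cite[Lemma 1]{Kan2008} and then used as a black box in the proof of Theorem~\ref{Thm_1}(i). Your argument---multinomial expansion followed by the classical finite-difference identity $\sum_{v=0}^{m'}(-1)^v\binom{m'}{v}(a-v)^m=0$ for $m<m'$ and $=m'!$ for $m=m'$, and the observation that the constraint $|\alpha|=s$ forces $\alpha=(s_1,\dots,s_n)$---is exactly the standard route (and is essentially the proof given in the cited reference). There is nothing to compare against in the present paper, and nothing missing in your write-up.
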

\begin{proof}[Proof of part (i) of Theorem \ref{Thm_1}]
Our goal is to show
\begin{equation}\label{recursion}
\lim_{n\to \infty}\EE[\mid X_{n }\mid ^2]=\lim_{n\to \infty}\EE\lft[X_{0}^2\prod_{k=0}^{n-1}(Z_{k}(\Delta t))^2\rht]=0\,,
\end{equation}
where $Z_k(\Delta t)$ is given by \eqref{e.2.3} and $X_n$ is given by \eqref{q4}.
{To illustrate the idea we assume  $\kappa=2H$.    The case $\kappa>2H$ can be handled analogously and is in fact simpler.}  We divide our proof into three steps.

\noindent{\bf Step 1}: Bound $\EE\mid X_n\mid ^2$ by a   confluent hypergeometric function.

Applying   Lemma \ref{Polarization} with  $s_1=\cdots=s_n=2$ and $s=2n$ we have
\[
 \prod_{k=0}^{n-1}Z^{2}_{k}(\Delta t)=\frac{1}{(2n)!}\sum\limits_{v_1=0}^{2}\cdots\sum\limits_{v_n=0}^{2}(-1)^{\sum_{i=1}^{n}v_i} {s_1 \choose v_1}\cdots {s_n \choose v_n}\cdot \lk \sum\limits_{i=1}^n h_i Z_{i}(\Delta t)\rk^{2n}\,,
\]
with $h_i=1-v_i$. Note that $Z_{i}(\Delta t)=\alpha_i+\beta_i\cdot V_i^H\overset{d}{\sim} N(\mu_i,\sigma_{i})$ with $\mu_i=\alpha_i$ and $\sigma^2_{i}=\beta_{i}^2(\Delta t)^{2H}$.  Thus we have
\begin{eqnarray}\label{A.EE_Prod2}
	\EE\lk\prod_{k=0}^{n-1}Z^{2}_{k}(\Delta t)\rk &\leq& \frac{2^n}{(2n)!}\sum\limits_{v_1=0}^{2}\cdots\sum\limits_{v_n=0}^{2}  \EE\lk \sum\limits_{i=1}^n (1-v_i)\cdot  Z_{i}(\Delta t)\rk^{2n} \nonumber \\
	&=:& \frac{2^n}{(2n)!}\sum\limits_{v_1=0}^{2}\cdots\sum\limits_{v_n=0}^{2}\EE\lk Q_n\rk^{2n} \,,
\end{eqnarray}
where $Q_n=Q_n(v_1,\cdots,v_n;x_1,\cdots,x_n)=\sum_{i=1}^n (1-v_i)\cdot Z_{i}(\Delta t)$.  It is obvious that  $Q_n$ is still a normal
random variable, with mean $\tilde{\mu}_n$ and variance $\tilde{\sigma}^2_n$ given by
\begin{align*}
	\tilde{\mu}_n:=\tilde{\mu}_n(v_1,\cdots,v_n)=\,&\sum_{i=1}^n (1-v_i)\cdot \mu_i=\sum_{i=1}^n (1-v_i)\cdot \alpha_i \,, 
\end{align*}
and 
\begin{align*} 
	\tilde{\sigma}^2_n:=&\tilde{\sigma}^2_n(v_1,\cdots,v_n)=\,  \EE\lk\lft[\sum_{i=1}^n (1-v_i)\cdot \beta_i\cdot V_i^H \rht]^2 \rk\\
	=&\sum_{i,j=1}^{n} (1-v_i)(1-v_j)\cdot\beta_i\beta_j\cdot \EE[V_i^H V_j^H]\,.
\end{align*}
From the  raw moment formula  (\cite[Eq. (17)]{2012Moments}) it follows
\begin{align}\label{A.EE_Xn2}
	\EE\lk Q_n\rk^{2n}=&\frac{2^{n}}{\sqrt{\pi}}\tilde{\sigma}_n^{2n} \Gamma\lc\frac{2n+1}{2}\rc\cdot \Phi\left(-n,\frac{1}{2},-\frac{\tilde{\mu}_n^2}{2\tilde{\sigma}_n^2}\right)\nonumber \\
	=&\frac{2^{n}}{\sqrt{\pi}}\tilde{\sigma}_n^{2n} \Gamma\lc\frac{2n+1}{2}\rc\cdot \exp\lc-\frac{\tilde{\mu}_n^2}{2\tilde{\sigma}_n^2}\rc \Phi\left(n+\frac{1}{2},\frac{1}{2},\frac{\tilde{\mu}_n^2}{2\tilde{\sigma}_n^2}\right)\,,
\end{align}
 where we   used  Kummer's transformation (see e.g. \eqref{C.Kummer_trans} of the appendix): $\Phi(\alpha,\gamma,z)=\exp(z)\Phi(\gamma-\alpha,\gamma,-z)$. Here,  $\Phi(\alpha,\gamma,z)$ is Kummer's confluent hypergeometric function  (see \eqref{C.Kummer_def} or Chapter 13 in \cite{NIST} for more details).

By employing the differentiation formula   \eqref{C.Kummer_diff}  and then \eqref{C.Kummer_Para2}, we have with the substitution  $\eta=\frac{\tilde{\mu}_n^2}{2\tilde{\sigma}_n^2}$
\begin{align*}
	\frac{d}{d\eta}\bigg[ e^{-\eta}\Phi&\left(\frac{n+1}{2},\frac{1}{2},\eta\right) \bigg] \\
	&=\,n\cdot e^{-\eta}\Phi\left(\frac{n+1}{2},\frac{3}{2},\eta\right)\\
	&=\,n\cdot e^{-\eta}\cdot\frac{2^{\frac{n-3}{2}}\Gamma(\frac n2)e^{\frac \eta2}}{\sqrt{2\eta \pi}}\times[U(n-1/2,-\sqrt{2\eta})-U(n-1/2,\sqrt{2\eta})] \\
	&=\,n\cdot e^{- \frac{\tilde{\mu}_n^2}{4\tilde{\sigma}_n^2}}\cdot \frac{2^{\frac{n-3}{2}}\Gamma(\frac n2)}{\sqrt{\pi\tilde{\mu}_n^2/\tilde{\sigma}_n^2}}\times\lk U\lc n-\frac 12,-\sqrt{\tilde{\mu}_n^2/\tilde{\sigma}_n^2}\rc-U\lc n-\frac 12,\sqrt{\tilde{\mu}_n^2/\tilde{\sigma}_n^2}\rc\rk\,.
\end{align*}
Using the identity  \eqref{C.Para}, we have
\begin{align*}
	&U\lc n-\frac 12,-\sqrt{\tilde{\mu}_n^2/\tilde{\sigma}_n^2}\rc-U\lc n-\frac 12,\sqrt{\tilde{\mu}_n^2/\tilde{\sigma}_n^2}\rc \\
	=\ &\frac{e^{- \frac{\tilde{\mu}_n^2}{4\tilde{\sigma}_n^2}}}{\Gamma(n)}\int_{0}^{\infty} w^{n-1}e^{-w^2/2}\cdot\lk\, e^{w\sqrt{\tilde{\mu}_n^2/\tilde{\sigma}_n^2}}-e^{-w\sqrt{\tilde{\mu}_n^2/\tilde{\sigma}_n^2}} \,\rk dw\geq 0\,.
\end{align*}
This implies  $e^{-\eta}\Phi\left(\frac{n+1}{2},\frac{1}{2},\eta\right)$ is an increasing function with respect  to the variable $\eta(=\frac{\tilde{\mu}_n^2}{2\tilde{\sigma}_n^2})$.
Thus, $\EE\lk Q_n\rk^{2n}$ can be bounded   by the  value at  $\tilde{\mu}_n$ with $\mathsf{\tilde{m}}_n:=\tilde{\mu}_n(0,\cdots,0)=\sum_{i=1}^n \alpha_i$  of this  function,  i.e.,
\begin{align}\label{A.EE_Xn2}
	\EE\lk Q_n\rk^{2n}\leq \frac{2^{n}}{\sqrt{\pi}}\tilde{\sigma}_n^{2n} \Gamma\lc\frac{2n+1}{2}\rc\cdot \exp\lc-\frac{\mathsf{\tilde{m}}_n^2}{2\tilde{\sigma}_n^2}\rc \Phi\left(n+\frac{1}{2},\frac{1}{2},\frac{\mathsf{\tilde{m}}_n^2}{2\tilde{\sigma}_n^2}\right)\,.
\end{align}

\noindent{\bf Step 2}: Analysis of the   confluent hypergeometric function in
\eqref{A.EE_Xn2}.

A key ingredient  of our  proof is  to analyze the asymptotic behavior  as $n\to \infty$ of the right hand of \eqref{A.EE_Xn2} and  this is the objective of this step.

We  claim  that there exists a positive constant $C$ which might change from line to line (we shall not point out the universal constants $C$ unless necessary in this article) 
 such that
\begin{align}\label{e.2.7}
\Phi\left(\frac a2 +\frac 14,\frac 12, \frac{z^2}{2}\right)\leq&
C\cdot 2^{\frac a2-\frac 34}\Gamma\Big(\frac a2+\frac 34 \Big) \times \frac{z^{a-\frac 12}\exp(\frac{z^2}{2})}{\Gamma(\frac 12+a)}
\asymp  \frac{z^{a-\frac 12}\exp(\frac{z^2}{2})}{2^{a/2}\Gamma(\frac a2+\frac 14)} \,.
\end{align}
We shall show the key asymptotic  behaviors of the confluent hypergeometric functions $\Phi(a,b,z)$. The idea is motivated by the Poincar\'e-type asymptotic forms \eqref{C.Kummer_Asymp} of confluent hypergeometric function. In our case, we have  $a=2n+\frac 12$, so the parameter $a$ can also goes to infinity. Fortunately,  we have $z^{2}=\frac{\mathsf{\tilde{m}}_n^2}{\tilde{\sigma}_n^2}\geq C\cdot n^{2H}$ (see the proof in the Appendix A) , the parameter $a$ is majored by $z$ since $H>1/2$. 

To prove the claim \eqref{e.2.7} we  employ the integral representation of the parabolic cylinder functions \eqref{C.Para}.
  For $z>0$ the parabolic cylinder functions are computed as follows:
\begin{align}
	U(a,z)&=\frac{z^{a+\frac 12}\exp(-\frac{z^2}{4})}{\Gamma(\frac 12+a)}\int_{0}^{\infty} t^{a-\frac 12} \exp(-z^2(\frac 12 t^2+t))dt\nonumber \\
	&=\frac{z^{a+\frac 12}\exp(\frac{z^2}{4})}{\Gamma(\frac 12+a)}\int_{1}^{\infty} (s-1)^{a-\frac 12}\exp(-\frac{z^2 s^2}{2})ds
	\label{e.2.8}
\end{align}
and
\begin{align}
	U(a,-z)&=\frac{z^{a+\frac 12}\exp(-\frac{z^2}{4})}{\Gamma(\frac 12+a)}\int_{0}^{\infty} t^{a-\frac 12} \exp(-z^2(\frac 12 t^2-t))dt\nonumber  \\
	&=\frac{z^{a+\frac 12}\exp(\frac{z^2}{4})}{\Gamma(\frac 12+a)}\int_{-1}^{\infty} (s+1)^{a-\frac 12}\exp(-\frac{z^2 s^2}{2})ds\,.\label{e.2.9}
\end{align}
The sum of the integrals in \eqref{e.2.8}-\eqref{e.2.9}  can be    dominated as
follow
(with  $a=2n+\frac 12$ and $z^{2}=\frac{\mathsf{\tilde{m}}_n^2}{\tilde{\sigma}_n^2}$).
\begin{align*}
	\int_{1}^{\infty} (s-1)^{a-\frac 12}&\exp(-\frac{z^2 s^2}{2})ds+\int_{-1}^{\infty} (s+1)^{a-\frac 12}\exp(-\frac{z^2 s^2}{2})ds \\
	\leq\, & 2\int_{-1}^{\infty} (s+1)^{a-\frac 12}\exp(-\frac{z^2 s^2}{2})ds=2\int_{-1}^{\infty} (s+1)^{2n}\exp(-\frac{\mathsf{\tilde{m}}_n^2 s^2}{2\tilde{\sigma}_n^2})ds \,.
\end{align*}
Basically, we know that $z^{2}=\frac{\mathsf{\tilde{m}}_n^2}{\tilde{\sigma}_n^2}\geq C\cdot  n^{2H}\ge  n$ for $n$ large enough. So, for sufficient large $n$
\[
 (s+1)^{2(n+1)}\leq \exp \left(2(n+1)s \right)\leq \exp \left(\frac{\mathsf{\tilde{m}}_n^2 s^2}{4\tilde{\sigma}_n^2} \right)
\]
for all $s\geq -1$. Therefore, we can easily obtain
\begin{align*}
	\int_{-1}^{\infty} (s+1)^{2n}\exp\left(-\frac{\mathsf{\tilde{m}}_n^2 s^2}{2\tilde{\sigma}_n^2}\right)ds \leq& \int_{-1}^{\infty} \exp\left(-\frac{\mathsf{\tilde{m}}_n^2 s^2}{4\tilde{\sigma}_n^2}\right)ds \\
	\leq& \int_{\RR} \exp\left(-\frac{\mathsf{\tilde{m}}_n^2 s^2}{4\tilde{\sigma}_n^2}\right)ds=C\cdot \frac{\tilde{\sigma}_n}{\mathsf{\tilde{m}}_n}=C\cdot \frac{1}{z}\,.
\end{align*}

Recall the relation between $\Phi(a,b,z)$ and the parabolic cylinder functions $U(a,z)$ given by \eqref{C.Kummer_Para1}. As a result, we get
\begin{align}
	\Phi\Big(\frac a2 +\frac 14,\frac 12, \frac{z^2}{2}\Big)&=\frac{2^{\frac a2-\frac 34}}{\sqrt{\pi}}\Gamma\Big(\frac a2+\frac 34 \Big)\exp(\frac{z^2}{4})\times[U(a,z)+U(a,-z)] \nonumber\\
	&\leq C\cdot 2^{\frac a2-\frac 34}\Gamma\Big(\frac a2+\frac 34 \Big)\exp(\frac{z^2}{4})\times \frac{z^{a+\frac 12}\exp(\frac{z^2}{4})}{\Gamma(\frac 12+a)}\times \frac{1}{z} \nonumber \\
	&= C\cdot 2^{\frac a2-\frac 34}\Gamma\Big(\frac a2+\frac 34 \Big) \times \frac{z^{a-\frac 12}\exp(\frac{z^2}{2})}{\Gamma(\frac 12+a)}\,. \label{e.2.4}
\end{align}
Thus we finish the proof of our  claim  \eqref{e.2.7}.

\noindent{\bf Step 3}: Completion of the proof of part (i) of Theorem \ref{Thm_1}.


Applying \eqref{e.2.4}
with $a=2n+\frac 12$, $z^{2}=\frac{\mathsf{\tilde{m}}_n^2}{2\tilde{\sigma}_n^2}\geq C\cdot \frac{n^{2}}{n^{2-2H}}=   n^{2H}\ge n$ we obtain 
\begin{align}\label{A.EE_Prod.b2}
	\EE\lk\prod_{k=1}^{n}Z^{2}_{k}(\Delta t)\rk \leq&\, \frac{2^n}{(2n)!}\sum\limits_{v_1=0}^{2}\cdots\sum\limits_{v_n=0}^{2}\EE\lk Q_n\rk^{2n} \nonumber\\
\leq&\, \frac{2^n\cdot 3^n}{(2n)!}\cdot \frac{2^{n}}{\sqrt{\pi}}\tilde{\sigma}_n^{2n} \Gamma\lc\frac{2n+1}{2}\rc\cdot \exp\lc-\frac{\mathsf{\tilde{\mu}}_n^2}{2\tilde{\sigma}_n^2}\rc
\Phi\left(n+\frac{1}{2},\frac{1}{2},\frac{\mathsf{\tilde{\mu}}_n^2}{2\tilde{\sigma}_n^2}\right) \nonumber\\
	\leq&\, \frac{2^n\cdot 3^n}{(2n)!}\cdot \frac{2^{n}}{\sqrt{\pi}}\tilde{\sigma}_n^{2n} \Gamma\lc\frac{2n+1}{2}\rc\cdot \exp\lc-\frac{\mathsf{\tilde{m}}_n^2}{2\tilde{\sigma}_n^2}\rc \Phi\left(n+\frac{1}{2},\frac{1}{2},\frac{\mathsf{\tilde{m}}_n^2}{2\tilde{\sigma}_n^2}\right) \nonumber\\
	\leq&\, \frac{2^n\cdot 3^n}{(2n)!}\cdot \frac{2^{n}}{\sqrt{\pi}}\tilde{\sigma}_n^{2n} \Gamma\lc\frac{2n+1}{2}\rc\cdot \exp\lc-\frac{\mathsf{\tilde{m}}_n^2}{2\tilde{\sigma}_n^2}\rc \nonumber \\
	&\qquad\quad \cdot \frac{C}{\Gamma(n+\frac{1}{2})}\lc\frac{\mathsf{\tilde{m}}_n^2}{2\tilde{\sigma}_n^2}\rc^{n} \exp\lc\frac{\mathsf{\tilde{m}}_n^2}{2\tilde{\sigma}_n^2}\rc \nonumber\\
	\asymp&\, \frac{6^n\cdot\mathsf{\tilde{m}}_n^{2n}}{(2n)!}\asymp \frac{6^n\cdot n^{2n}}{\sqrt{4\pi n}\cdot(2n/e)^{2n}}\lc\frac{1-\theta}{\theta}\rc^{2n} \asymp \frac{(\sqrt{3/2}e)^{2n}}{\sqrt{4\pi n}}\lc\frac{1-\theta}{\theta}\rc^{2n}\,,
\end{align}
by Stirling's approximation, where we   apply the  claim \eqref{e.2.7}  in the above forth
inequality
and  the fact that $\frac{1}{2}<\theta<1$ in the above  last inequality.  Now, it is obvious  to see from \eqref{A.EE_Prod.b2} that $\EE\lk\prod_{k=1}^{n}Z^{2}_{k}(\Delta t)\rk\to 0$ as $n\to \infty$ if
\[
 \sqrt{3/2}e\cdot \frac{1-\theta}{\theta} \leq 1 \,\Leftrightarrow\, \theta \geq \frac{\sqrt{3/2}\cdot e}{\sqrt{3/2}\cdot e+1}\approx 0.77\,,
\]
proving   part (i) of Theorem \ref{Thm_1}.
\end{proof}

\begin{Rem}
 We believe our method can also work under the  condition that $X_0= 0$ with probability $0$ and  $\EE[\mid X_0\mid ^2]<\infty$.    For example, one can apply H\"older inequality to \eqref{recursion} and then follow the same argument  there. But this makes the computations much more involved.
 We are not pursuing the detail along this direction to simplify our presentation.
\end{Rem}

\begin{Rem}\label{Rem.2.2}
Following the same strategy as in our proof, we can prove more general results:  For any integer $p\geq 2$,
 if $\frac{1}{(1+M_{p})}\leq \theta <1$,
 where $M_{p}=\frac{2}{e}\cdot\frac{1}{(p+1) {p\choose p/2}}
 $,  then $\lim\limits_{n\to \infty}\mathbb{E}(X_{n}^{p})\to 0$.
\end{Rem}

\subsection{The case of $\kappa> 3/2$ and $\frac{1}{2}< \theta\leq 1$}\label{subsection 2.3}
In this subsection we shall prove part (ii) of Theorem \ref{Thm_1}. To begin with, let us recall the celebrated Gaussian correlation inequality.
\begin{Lem}{\cite[Theorem 2]{LM2017}}\label{GCI}
	Let $n=n_1+n_2$ and $X$ be an n-dimensional centered Gaussian vector. Then for any $t_1,\cdots,t_n>0$,
	\begin{align*}
		\mathbbm{P}\{&\mid X_1\mid\leq t_1,\dots,\mid X_n\mid\leq t_n\} \\
		&\geq \mathbbm{P}\{\mid X_1\mid\leq t_1,\dots,\mid X_{n_1}\mid\leq t_{n_1}\}\cdot \mathbbm{P}\{\mid X_{n_1+1}\mid\leq t_{n_1+1},\dots,\mid X_{n}\mid\leq t_{n}\}\,.
	\end{align*}
\end{Lem}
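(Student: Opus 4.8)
The plan is to deduce this from Royen's proof of the Gaussian correlation inequality (as presented in \cite{LM2017}), specialized to boxes. Order the coordinates so that the two groups are $I_1=\{1,\dots,n_1\}$ and $I_2=\{n_1+1,\dots,n\}$, and write the covariance matrix of $X$ in the corresponding block form $\Sigma=\begin{pmatrix}A&B\\ B^{\top}&D\end{pmatrix}$. By a routine approximation (replace $\Sigma$ by $\Sigma+\varepsilon I$ and let $\varepsilon\downarrow 0$, using continuity of $\mathbbm{P}(X\in K)$ in $\Sigma$) we may assume $\Sigma$ is positive definite. For $\rho\in[0,1]$ set
\[
\Sigma_{\rho}=\begin{pmatrix}A&\rho B\\ \rho B^{\top}&D\end{pmatrix}=\rho\,\Sigma+(1-\rho)\begin{pmatrix}A&0\\ 0&D\end{pmatrix},
\]
which is again positive definite, being a convex combination of two covariance matrices. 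Let $X^{(\rho)}\sim N(0,\Sigma_{\rho})$, put $K=\{x\in\RR^{n}:|x_i|\le t_i,\ i=1,\dots,n\}$, and define $\phi(\rho)=\mathbbm{P}(X^{(\rho)}\in K)$. Then $\phi(1)$ is the left-hand side of the claimed inequality, while $\phi(0)$ equals its right-hand side because under $\Sigma_{0}$ the coordinates in $I_1$ are independent of those in $I_2$. Hence it suffices to show that $\phi$ is nondecreasing on $[0,1]$.

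I would next pass to the squared coordinates. For $s_i\ge 0$ and $\Lambda_s=\mathrm{diag}(s_1,\dots,s_n)$, a direct Gaussian integral gives $\EE\exp(-\sum_i s_i (X_i^{(\rho)})^2)=\det(I+2\Lambda_s\Sigma_{\rho})^{-1/2}$, so the vector $Y^{(\rho)}=((X_1^{(\rho)})^2,\dots,(X_n^{(\rho)})^2)$ has the multivariate gamma distribution with Laplace transform $\det(I+2\Lambda_s\Sigma_{\rho})^{-1/2}$ (shape $\tfrac12$, scale matrix $2\Sigma_{\rho}$); denote its density on the positive orthant by $p_{\rho}$, so that $\phi(\rho)=\int_{\prod_i[0,t_i^2]}p_{\rho}(y)\,dy$. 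The crucial input — and the step I expect to be the main obstacle — is Royen's differentiation identity for this density: for $i\ne j$,
\[
\frac{\partial}{\partial c_{ij}}\,p(\,\cdot\,;C)=2c_{ij}\,\frac{\partial^{2}}{\partial y_i\,\partial y_j}\,\widetilde p^{(ij)}(\,\cdot\,;C),
\]
where $\widetilde p^{(ij)}(\,\cdot\,;C)$ is a positive constant times the density of the multivariate gamma law with the same scale matrix $C$ but with the shape parameter raised by one at coordinates $i$ and $j$. This is proved by differentiating $\det(I+\Lambda_s C)^{-\alpha}$ with respect to the entry $c_{ij}$ via the cofactor (Jacobi) formula and inverting the Laplace transform. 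The point of the explicit factor $2c_{ij}$ is that along the path $\Sigma_{\rho}$ we have $(\Sigma_{\rho})_{ij}=\rho B_{ij}$, $\partial_{\rho}(\Sigma_{\rho})_{ij}=B_{ij}$, and only the cross-block entries vary, so by the chain rule
\[
\partial_{\rho}p_{\rho}(y)=\sum_{i\in I_1,\,j\in I_2}2(\Sigma_{\rho})_{ij}B_{ij}\,\frac{\partial^{2}}{\partial y_i\,\partial y_j}\widetilde p^{(ij)}_{\rho}(y)=\sum_{i\in I_1,\,j\in I_2}2\rho\,B_{ij}^{2}\,\frac{\partial^{2}}{\partial y_i\,\partial y_j}\widetilde p^{(ij)}_{\rho}(y),
\]
and every coefficient $2\rho B_{ij}^{2}$ is nonnegative.

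Finally I would integrate this identity over the box $\prod_k[0,t_k^2]$. For each pair $(i,j)$, carrying out the $y_i$- and $y_j$-integrations of $\frac{\partial^{2}}{\partial y_i\partial y_j}\widetilde p^{(ij)}_{\rho}$ leaves only the boundary contributions at $y_i\in\{0,t_i^2\}$, $y_j\in\{0,t_j^2\}$; the terms with $y_i=0$ or $y_j=0$ vanish, since $\widetilde p^{(ij)}_{\rho}$ has shape parameter $\tfrac32>1$ at coordinates $i$ and $j$ and therefore vanishes on the faces $\{y_i=0\}$ and $\{y_j=0\}$ of the orthant. Hence only the $y_i=t_i^2$, $y_j=t_j^2$ term survives, and it equals $\int_{\prod_{k\ne i,j}[0,t_k^2]}\widetilde p^{(ij)}_{\rho}(t_i^2,t_j^2,y_{\mathrm{rest}})\,dy_{\mathrm{rest}}\ge 0$. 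Summing over the pairs gives $\phi'(\rho)\ge 0$, so $\phi(1)\ge\phi(0)$, which is exactly the asserted inequality. The remaining work — justifying differentiation under the integral, the vanishing of the boundary faces, and the initial approximation step — is routine given the explicit series/Bessel representations of the multivariate gamma density, so the genuine content is Royen's identity of the preceding paragraph.
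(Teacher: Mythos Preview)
The paper does not prove this lemma; it is simply quoted from \cite[Theorem~2]{LM2017} (Lata\l a--Matlak's exposition of Royen's theorem) and used as a black box in the proof of part~(ii) of Theorem~\ref{Thm_1}. Your sketch is exactly Royen's argument as presented in that reference---interpolate the cross-block of the covariance, pass to the multivariate gamma law of the squared coordinates, differentiate in the interpolation parameter, and integrate the resulting mixed partials over the box---so there is nothing to compare.

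One caveat on your statement of the key identity. The formula $\partial_{c_{ij}}p=2c_{ij}\,\partial_{y_iy_j}\widetilde p^{(ij)}$, with $\widetilde p^{(ij)}$ a multivariate gamma density whose ``shape is raised by one at coordinates $i$ and $j$'', is only literally correct when $n=2$. In higher dimensions Royen's identity has the form $\partial_\rho p_\rho=\sum_{i\in I_1,\,j\in I_2}\partial_{y_iy_j}h_{ij}^\rho$ with $h_{ij}^\rho\ge 0$, but the nonnegativity of each $h_{ij}^\rho$ does \emph{not} reduce to the sign of the scalar $2\rho B_{ij}^{2}$; it requires recognising the Laplace transform of $h_{ij}^\rho$ as that of a nonnegative measure (this is the genuinely new step in Royen's paper, and the place where the Wishart/infinite-divisibility structure enters). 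Since you explicitly flag this as the main obstacle and defer it to \cite{LM2017}, your outline remains valid; just be aware that the heuristic ``every coefficient $2\rho B_{ij}^{2}$ is nonnegative'' is not the operative reason the argument works beyond the two-dimensional case.
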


\begin{proof}[Proof of part (ii) of Theorem \ref{Thm_1}]
Let us consider
\begin{align}
	X_{n+1}^{2}=&\prod_{k=1}^{n}(Z_{k}(\Delta t))^{2}=\prod_{k=1}^{p(n)}(Z_{k}(\Delta t))^{2}\cdot \prod_{k=p(n)+1}^{n}(Z_{k}(\Delta t))^{2} \nonumber\\
	 &{ =\prod_{k=1}^{p(n)}(Z_{k}(\Delta t))^{2}\cdot \prod_{k=p(n)+1}^{n}(Z_{k}(\Delta t))^{2} \cdot \mathbbm{1}_{\{Z_{k}(\Delta t) \leq 0:~p(n)+1\leq k\leq n \}}
  	} \label{A.Rec1} \\
	&\qquad\qquad\qquad { +  \left[ \prod_{k=1}^{n}(Z_{k}(\Delta t))^{2}
	\right] \cdot  \left[ 1-\prod_{k=p(n)+1}^{n} \  \mathbbm{1}_{\{Z_{k}(\Delta t) \leq 0:~p(n)+1\leq k\leq n \}} \right]\,,  } \label{A.Rec2}
\end{align}
with $p(n)=n/p$, $q(n)=n/q$ and $1/p+1/q=1$. Formally, we know $Z_{k}(\Delta t)$ converges to $c_\theta=-\frac{1-\theta}{\theta}<0$. Thus, the probability of the event $\{Z_{k}(\Delta t) \leq 0:~p(n)+1\leq k\leq n \}$ converges to one.	

Firstly, applying the following inequality of  bounding  the    geometric mean by the arithmetic one
\begin{empheq}[left=\empheqlbrace]{align}
&a_0^{p_0}a_1^{p_1}\cdots a_n^{p_n}\leq \left(\frac{p_0a_0+p_1a_1+\cdots+p_na_n}{p_0+p_1+\cdots +p_n}\right)^{p_0+p_1+\cdots+p_n}\nonumber\\
&a_0,\cdots,a_n \geq 0,~ p_0,p_1,\cdots,p_n\in \mathbb{N^{+}},
\quad \text{with~} p_0=\cdots=p_n=2\nonumber
\end{empheq}
 to the second factor of \eqref{A.Rec1} yields
\begin{align}\label{A.Ineq.AG}
	\bar X_{n+1}^{2}:=&
 	\prod_{k=1}^{p(n)}(Z_{k}(\Delta t))^{2}\cdot
	\prod_{k=p(n)+1}^{n}(-Z_{k}(\Delta t))^{2} \cdot \mathbbm{1}_{\{Z_{k}(\Delta t) \leq 0:~p(n)+1\leq k\leq n \}} \nonumber\\
	\leq&
 	\prod_{k=1}^{p(n)}(Z_{k}(\Delta t))^{2}\cdot
	\left[\frac{1}{n-p(n)}\sum_{k=p(n)+1}^{n}Z_{k}(\Delta t)\cdot \mathbbm{1}_{\{Z_{k}(\Delta t) \leq 0:~p(n)+1\leq k\leq n \}} \right]^{2n}.
\end{align}
By the H\"older inequality, we then have
\begin{align}
\EE \bar X_{n+1}^{2}	\leq& \lc\EE\prod_{k=1}^{p(n)}(Z_{k}(\Delta t))^{4}\rc^{\frac 12}\cdot \lc\EE\left[\frac{1}{n-p(n)}\sum_{k=p(n)+1}^{n}Z_{k}(\Delta t) \right]^{8n}\rc^{\frac 14} \nonumber\\
	&\qquad\qquad\qquad\qquad~ \cdot\Big( \mathbbm{P}{\{Z_{k}(\Delta t) \leq 0:~p(n)+1\leq k\leq n \}}\Big)^{\frac 14} \nonumber\\
	\leq& \lc\EE\prod_{k=1}^{p(n)}(Z_{k}(\Delta t))^{4}\rc^{\frac 12}\cdot \lc\EE\left[\frac{1}{n-p(n)}\sum_{k=p(n)+1}^{n}Z_{k}(\Delta t) \right]^{8n}\rc^{\frac 14}\,.\label{eq.Rec1}
\end{align}
By the same methods as  in the proof of part (i),   there is an $M>1$ such that the first factor of \eqref{eq.Rec1}
can be bounded by
\begin{equation}
 \lc\EE\prod_{k=1}^{p(n)}(Z_{k}(\Delta t))^{4}\rc^{\frac 12}\leq M^{p(n)}=(M^{1/p})^n\,.
\end{equation}
For the second term in  \eqref{eq.Rec1}, we have from Kummer's transformation \eqref{C.Kummer_trans}
\begin{align}\label{eq.Rec1_2}
\EE\bigg( \bigg[\frac{1}{n-p(n)}&\sum_{k=p(n)+1}^{n}Z_{k}(\Delta t)\bigg]^{8n} \bigg) \nonumber\\
=\,& \frac{C}{\sqrt{\pi}}\bar{\sigma}_{p(n)}^{8n}2^{4n} \Gamma\lc\frac{8n+1}{2}\rc\cdot
\Phi\left(-4n,\frac{1}{2};-\frac{\bar{\mu}_{p(n)}^2}{2\bar{\sigma}_{p(n)}^2}\right) \nonumber\\
=\,& \frac{C}{\sqrt{\pi}}\bar{\sigma}_{p(n)}^{8n}2^{4n} \Gamma\lc\frac{8n+1}{2}\rc\cdot \exp\lc-\frac{\bar{\mu}_{p(n)}^2}{2\bar{\sigma}_{p(n)}^2}\rc
\Phi\left(4n+\frac{1}{2},\frac{1}{2};-\frac{\bar{\mu}_{p(n)}^2}{2\bar{\sigma}_{p(n)}^2}\right)\,,
\end{align}
where $\bar{\mu}_{p(n)}:=\frac{1}{n-p(n)}\sum_{k=p(n)+1}^n \alpha_k$ and $\bar{\sigma}_{p(n)}^2:=\EE\lk \lvert \frac{1}{n-p(n)}\sum_{k=p(n)+1}^n \beta_k\cdot V_k^H  \rvert^2 \rk$. Then by the same procedure as in Step 2, we can bound \eqref{eq.Rec1_2}, namely, the second factor of \eqref{eq.Rec1}  by the following:
\begin{align}\label{eq.Rec1_22}
&
C \bar{\sigma}_{p(n)}^{8n}2^{4n} \Gamma\lc\frac{8n+1}{2}\rc \exp\lc-\frac{\bar{\mu}_{p(n)}^2}{2\bar{\sigma}_{p(n)}^2}\rc\cdot\frac{\Gamma(1/2)}{\Gamma(4n+1/2)}\lc\frac{\bar{\mu}_{p(n)}^2}{2\bar{\sigma}_{p(n)}^2}\rc^{4n}\exp\lc\frac{\bar{\mu}_{p(n)}^2}{2\bar{\sigma}_{p(n)}^2}\rc \nonumber\\
&\qquad \leq\, C\bar{\mu}^{q(n)}{ \leq C \left( \frac{1-\theta}{\theta} \right)^{q(n)}}=C \left( \frac{1-\theta}{\theta} \right)^{n/q} \to 0\,, \qquad\qquad\qquad(\text{as~} n\to \infty)\,.
\end{align}
Combining \eqref{eq.Rec1}-\eqref{eq.Rec1_22} we conclude  for  the first summand  \eqref{A.Rec1}
\begin{align*}
\EE \left[\bar X_n^2\right] 	&\leq M^{p(n)}\cdot \mid c_{\theta}\mid ^{q(n)}\to 0\,,& \\
	&\Leftrightarrow M^{1/p}\cdot \mid c_{\theta}\mid ^{1/q}<1 \,\Leftrightarrow\, p>\frac{\ln(\mid c_{\theta}\mid )-\ln(M)}{\ln(\mid c_{\theta}\mid )}>1\,.&
\end{align*}
Next, we  treat   \eqref{A.Rec2}.  It is easy to see that  if $\beta_n>0$ (i.e. $\lambda<0,~ \mu>0$), then
\begin{align*}
	\mathbbm{P}\{Z_{k}(\Delta t) \leq 0\}=&\mathbbm{P}\{V^H_{k} \leq -\alpha_{k}/\beta_{k}=-C_{\lambda,\mu,\Delta t}(n)\}=\mathbbm{P}\{V^H_{k} \geq C_{\lambda,\mu,\Delta t}(n)\}\,,
\end{align*}
and if  $\beta_n<0$ (i.e. $\lambda<0,~ \mu<0$), then
\begin{align*}
	\mathbbm{P}\{Z_{k}(\Delta t) \leq 0\}=&\mathbbm{P}\{V^H_{k} \geq -\alpha_{k}/\beta_{k}=-C_{\lambda,\mu,\Delta t}(n)\}=\mathbbm{P}\{V^H_{k} \leq C_{\lambda,\mu,\Delta t}(n)\}\,,
\end{align*}
where $C_{\lambda,\mu,\Delta t}(n):=\frac{1+\kappa(1-\theta)\lambda n^{\kappa-1} \Delta t^{\kappa}}{\mu}$. Consequently, we have by the classical concentration inequality for normal variable $V^H_{k}$
\begin{align}\label{A.ConcIneq}
	\mathbbm{P}\{Z_{k}(\Delta t) \leq 0\}\geq&\mathbbm{P}\{\mid V^H_{k}\mid  \leq \mid C_{\lambda,\mu,\Delta t}(n)\mid \} \geq1-2\exp\lk-\frac{\mid C_{\lambda,\mu,\Delta t}(n)\mid ^2}{2(\Delta t)^{2H}}\rk \,.
\end{align}
Then, by the 
Gaussian correlation inequality (Lemma \ref{GCI}), we can get
\begin{align}\label{A.GCIneq}
	\mathbbm{P}\{Z_{k}(\Delta t) \leq 0:~p(n)+1\leq k\leq n \}\geq& \mathbbm{P}\{\mid V^H_{k}\mid  \leq \mid C_{\lambda,\mu,\Delta t}(n)\mid :~p(n)+1\leq k\leq n\} \nonumber\\
	\geq& \prod_{k=p(n)+1}^{n} \mathbbm{P}\{\mid V^H_{k}\mid  \leq \mid C_{\lambda,\mu,\Delta t}(n)\mid \} \nonumber \\
	\geq& \prod_{k=p(n)+1}^{n}\lc 1-2\exp\lk-\frac{\mid C_{\lambda,\mu,\Delta t}(n)\mid ^2}{2(\Delta t)^{2H}}\rk \rc\,.
\end{align}
Denote
\begin{align}
%
& \tilde X_n :={  1- \prod_{k=p(n)+1}^{n} \  \mathbbm{1}_{\{Z_{k}(\Delta t) \leq 0:~p(n)+1\leq k\leq n \}} } \,. \nonumber
\end{align}
By the Weierstrass  product inequality:
\begin{align*}
	\prod_{i=1}^{n}(1-x_i)>1-\sum_{i=1}^{n}x_i\,,\quad \forall \quad \hbox{$ x_1, \cdots, x_n \in (0, 1)$}\,,
\end{align*}
we have
\begin{align}\label{A.GCIneqW}
\EE \left[\tilde X_n\right]	&\ \leq   1-\prod_{k=p(n)+1}^{n}\lc 1-2\exp\lk-\frac{\mid C_{\lambda,\mu,\Delta t}(n)\mid ^2}{2(\Delta t)^{2H}}\rk \rc\nonumber \\
	&\ \leq  2\sum_{k=p(n)+1}^{n}\exp\lk-\frac{\mid C_{\lambda,\mu,\Delta t}(n)\mid ^2}{2(\Delta t)^{2H}}\rk \lesssim \exp\lk-\frac{\mid C_{\lambda,\mu,\Delta t}(p(n))\mid ^2}{2(\Delta t)^{2H}}\rk\,,
\end{align}
since when  $n$ is sufficiently  large   that $2\exp[-\mid C_{\lambda,\mu,\Delta t}(n)\mid ^2/2(\Delta t)^{2H}]<1$.
Because $\tilde X_n $ is either $0$  or $1$,
i.e. $\tilde X_n^2= \tilde X_n $ we have   for the second summand  \eqref{A.Rec2}
\begin{align*}
	&\EE\left\{\left[\prod_{k=1}^{n}(Z_{k}(\Delta t))^{2}\right]   \cdot   \tilde X_n
	\right\}
	 \leq \lc\EE\bigg[\prod_{k=1}^{n}(Z_{k}(\Delta t))^{4}\bigg]\rc^{\frac 12} \cdot \Big( \EE (\tilde X_n) \Big)^{\frac 12} \\
	&\qqquad \leq C M^{2n}\cdot \exp\lk-\frac{\mid C_{\lambda,\mu,\Delta t}(p(n))\mid ^2}{2(\Delta t)^{2H}}\rk \asymp M^{2n} \exp\lk-\frac{\lambda^2}{\mu^2}\cdot p(n)^{2(\kappa-1)} (\Delta t)^{2\kappa-2H}\rk\,.
\end{align*}
Here we   applied $\EE\bigg[\prod_{k=1}^{n}(Z_{k}(\Delta t))^{4}\bigg]\leq M^{4n}$ for some constant $M>1$, which can be proved  analogously  as in  the proof of part (i) of the theorem. Hence,   it is easy to see if $\kappa>3/2$, $p(n)^{2(\kappa-1)}\geq C_p\cdot n^{2(\kappa-1)}\gg n$, then the above term converges to $0$.
\end{proof}

\subsection{The case of $0<\theta<\frac 1 2$}\label{subsection 2.4}
\label{section 2.2}
In this subsection we   prove part (iii) of Theorem \ref{Thm_1}.  First, we state the following
strong law of large numbers (SLLN).

\begin{Lem}{\cite[Theorem 1]{HRV2008}}\label{SLLN}
Let $\xi_1, \xi_2, \cdots, \xi_n$ be  a sequence of square-integrable random variables and suppose that there exists a sequence of constants $\sR_k$ such that
\begin{equation}\label{Cond.Upsilon}
	\sup_{n\geq 1}\mid \mathrm{Cov}(\xi_n,\xi_{n+k})\mid \leq \sR_k\,,~k\geq 1\,, \qquad \sum_{k=1}^{\infty} \frac{\sR_k}{k^q}<\infty\quad \text{for some}~0\leq q<1\,,
\end{equation}
and
\begin{equation}\label{Cond.Var}
	\sum_{k=1}^{\infty}\frac{\mathbb{V}(\xi_n)\cdot [\log(n)]^2}{n^2}<\infty\,,
\end{equation}
then the SLLN holds. More precisely, letting $S_n=\sum_{i=0}^{n}\xi_i$, one has
\begin{equation}\label{eq.SLLN}
	\lim_{n\to \infty} \frac{S_n-\mathbb{E}(S_n)}{n}=0 \quad\text{almost surely}\,.
\end{equation}
\end{Lem}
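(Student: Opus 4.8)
The plan is to reduce \eqref{eq.SLLN} to almost-sure control of the maximal partial sum along the dyadic scale, using a Rademacher--Menshov / M\'oricz type maximal inequality for \emph{dependent} variables, and then to show the resulting series is summable by means of the two hypotheses \eqref{Cond.Upsilon} and \eqref{Cond.Var}. First I would center the variables: put $Y_i=\xi_i-\EE[\xi_i]$ and $T_n=\sum_{i=0}^n Y_i=S_n-\EE[S_n]$, so the goal becomes $T_n/n\to 0$ almost surely.

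The heart of the matter is a sharp $L^2$ estimate for the increments together with a superadditivity property. Fix a $q\in[0,1)$ for which \eqref{Cond.Upsilon} holds and set $C_0:=\sum_{k\ge 1}\sR_k/k^q<\infty$. I claim that the function $g(a,b):=\sum_{i=a+1}^b \mathbb{V}(\xi_i)+2C_0(b-a)^{1+q}$ dominates $\EE[(T_b-T_a)^2]$ and is superadditive, i.e. $g(a,b)+g(b,c)\le g(a,c)$ for $a\le b\le c$. The domination follows by expanding the square and using $|\mathrm{Cov}(\xi_i,\xi_j)|\le \sR_{j-i}$ together with the elementary consequence $\sum_{k=1}^m \sR_k\le m^q\sum_{k\ge1}\sR_k/k^q=C_0 m^q$ of \eqref{Cond.Upsilon} (this is where $q\ge 0$ is used); the off-diagonal part is then bounded by $2\sum_{k=1}^{b-a-1}(b-a-k)\sR_k\le 2C_0(b-a)^{1+q}$. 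Superadditivity of the diagonal part is automatic, while superadditivity of $(b-a)^{1+q}$ is exactly the inequality $x^{1+q}+y^{1+q}\le (x+y)^{1+q}$, valid because $1+q\ge 1$.

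Next I would invoke M\'oricz's maximal inequality: for any superadditive dominating $g$ one has $\EE\big[\max_{a<m\le b}(T_m-T_a)^2\big]\le (\log_2 2(b-a))^2\,g(a,b)$. Taking $a=0$, $b=2^j$ gives $\EE\big[\max_{1\le m\le 2^j}T_m^2\big]\lesssim j^2\big[\sum_{i=1}^{2^j}\mathbb{V}(\xi_i)+C_0\,2^{j(1+q)}\big]$. Dividing by $2^{2j}$ and summing over $j$, the power term contributes $\sum_j j^2\,2^{-j(1-q)}<\infty$ because $q<1$, while for the variance term I interchange the order of summation: for fixed $i$ one has $\sum_{j:\, 2^j\ge i} j^2 2^{-2j}\lesssim (\log i)^2/i^2$, whence $\sum_j 2^{-2j}j^2\sum_{i\le 2^j}\mathbb{V}(\xi_i)\lesssim \sum_i \mathbb{V}(\xi_i)(\log i)^2/i^2<\infty$ by \eqref{Cond.Var}.

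Finally, $\sum_j \EE\big[2^{-2j}\max_{1\le m\le 2^j}T_m^2\big]<\infty$ forces $2^{-2j}\max_{1\le m\le 2^j}T_m^2\to 0$ almost surely (by Tonelli the nonnegative series $\sum_j 2^{-2j}\max_{1\le m\le 2^j}T_m^2$ is a.s.\ finite, so its terms vanish). Interpolating an arbitrary $n\in(2^{j-1},2^j]$ then gives $|T_n|/n\le 2\cdot 2^{-j}\max_{1\le m\le 2^j}|T_m|\to 0$ almost surely, which is precisely \eqref{eq.SLLN}. The main obstacle, and the only genuinely nontrivial ingredient, is the maximal inequality for the dependent partial sums; establishing it (either by citing M\'oricz or by a direct dyadic-bisection argument that produces the $(\log n)^2$ factor) hinges on the superadditivity of $g$, which is exactly what the covariance hypothesis \eqref{Cond.Upsilon} with exponent $q<1$ is tailored to supply.
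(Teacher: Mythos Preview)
The paper does not supply its own proof of this lemma: it is quoted verbatim from the external reference \cite{HRV2008} and used as a black box in the proof of part (iii) of Theorem~\ref{Thm_1}. There is therefore no in-paper argument to compare your proposal against.

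That said, your outline is sound and is essentially the classical route to such a SLLN for weakly dependent sequences. The two hypotheses are used exactly where they should be: \eqref{Cond.Upsilon} with exponent $q<1$ is what makes the off-diagonal covariance contribution superadditive of order $(b-a)^{1+q}$, so that M\'oricz's maximal inequality applies and the resulting power series $\sum_j j^2 2^{-j(1-q)}$ converges; \eqref{Cond.Var} is precisely what remains after interchanging the dyadic sum with the variance sum. The interpolation from the dyadic subsequence to all $n$ via the maximum is standard. The one place to be careful is the exact form of the maximal inequality: M\'oricz's 1976 result requires the dominating function to be superadditive (which you verify), and produces the $(\log n)^2$ factor that matches the weight in \eqref{Cond.Var}; if you prefer to avoid the citation, the dyadic-bisection proof works but needs a slightly delicate induction to keep the constant from blowing up. In short, your proposal is correct and is, in spirit, the argument one expects the cited source to contain.
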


With the help of this lemma we now  give 
the  proof of the last part of the theorem.

\begin{proof}[Proof of part (iii) of Theorem \ref{Thm_1}]
Denote  $Y_0=\ln X_0^2$, $Y_{k}=\ln\left(\alpha_k+\beta_kV_k^H\right)^2$ and $S_n=\sum\limits_{k=0}^{n}Y_k$. In the above definition  if $\alpha_k+\beta_k V_k^H=0$, then we put $Y_k:=0$.  Notice  that $(\alpha_k  +\beta_k V_k^H )^2$ is positive almost surely, so $Y_k$ are well defined for $k\geq 0$.
  We shall apply  Lemma \ref{SLLN} to  $\xi_n=Y_{n}$.   It is easier  to verify that \eqref{Cond.Var} holds. The main objective is to verify the conditions in \eqref{Cond.Upsilon}. For $q\in(2H-1,1)$,
the second condition of \eqref{Cond.Upsilon} holds if $\sR_k\asymp \mid k\mid ^{2H-2}$ for   sufficiently large $k$.
 Thus, the proof of part (iii) in Theorem \ref{Thm_1}
  is completed if we can show for some constant $C$
%
\begin{equation}\label{Ineq.Cov}
\sup_{n\ge 1}	\mid \mathrm{Cov}(Y_{n},Y_{n+k})\mid \leq \sR_k\leq  C\cdot \mid k\mid ^{2H-2}\,.
\end{equation}
In fact, assume \eqref{Ineq.Cov} and    recall that if $0<\theta<\frac 1 2$, then
$$
\lim_{n\to \infty}\frac{1}{n}\mathbb{E}(S_n)=\lim_{n\to \infty}\frac{1}{n}\sum_{k=1}^{n}
\mathbb{E}\left[\ln(\alpha_k+\beta_kV_{k}^{H})^{2}\right]= C\cdot\ln\left(\frac{1-\theta}{\theta}\right)^2=:C_\theta>0\,.
$$
 Therefore, by Lemma \ref{SLLN} with $q\in (2H-1, 1)$, we get
$$
\frac{S_n}{n}=\frac{\ln(X_{n})^2}{n}\overset{a.s.}\longrightarrow C_\theta >0\,.
$$
This  implies $(X_{n})^2\to \infty$ almost surely. Consequently, by Fatou's Lemma, one has
$$
\varliminf_{n\to \infty} \mathbb{E}\mid X_{n}\mid ^2\geq \mathbb{E}\left[\varliminf_{n\to \infty}\mid X_{n}\mid ^2\right]=\infty\,,
$$
which completes the proof of part (iii) of the theorem.

So, it suffices to show \eqref{Ineq.Cov}.  We shall show
that $\mid \mathrm{Cov}(Y_i,Y_j)\mid \lesssim \mid i-j\mid ^{2H-2}$ as $\mid i-j\mid \to \infty$ which is obviously  equivalent to \eqref{Ineq.Cov}.
In fact,
\[
\mathrm{Cov}(Y_i,Y_j)=\mathbb{E}\left[\ln(\alpha_i+\beta_iV_i^H)^2\ln(\alpha_j+\beta_jV_j^H)^2\right]
-\mathbb{E}\left[\ln(\alpha_i+\beta_iV_i^H)^2\right]\mathbb{E}\left[\ln(\alpha_j+\beta_jV_j^H)^2\right]\,.
\]
Denote the probability densities of normal variables $V_i^H$ and $V_j^H$ by $f_i(x)$ and $f_j(y)$, and denote the corresponding cumulative distributions by $F_i(x)$ and $F_j(y)$ respectively.  The symmetric covariance matrix of $V_i^H$ and $V_j^H$  is given by
\[
\Sigma
=\left(
         \begin{array}{cc}
           \sigma_i^2, & \rho_{ij} \sigma_i \sigma_j \\
           \rho_{ij} \sigma_i \sigma_j, & \sigma_j^2 \\
         \end{array}
       \right)\,,
\]
where $\sigma_i:=\sqrt{\mathbb{E}[(V_{i}^{H})^2]}=\mid t_{i+1}-t_{i}\mid ^H=\mid \Delta t\mid ^H$ and $\sigma_j:=\sqrt{\mathbb{E}[(V_{j}^{H})^2]}=\mid t_{j+1}-t_{j}\mid ^H=\mid \Delta t\mid ^H$ are standard deviations of $V_i^H$ and $V_j^H$, $\rho_{ij}:=\frac{\mathbb{E}[V_i^HV_j^H]}{\sigma_i \sigma_j}$  is the correlation coefficient
between $V_i^H$ and $V_j^H$. Their  joint distribution has the following   form
\begin{align}\label{eq.density}
	f_{i,j}(x,y)=\,&\frac{1}{\sqrt{(2\pi)^2\det(\Sigma)}}\exp\left(-\frac{X^{T}\Sigma^{-1}X}{2}\right) \nonumber \\
	=\,&\frac{1}{2\pi \sigma_i \sigma_j \sqrt{1-\rho_{ij}^2}}\exp\lc -\frac{1}{2(1-\rho_{ij}^2)}\lk \frac{x^2}{\sigma_i^2}-2\rho_{ij} \frac{x\cdot y}{\sigma_i\cdot  \sigma_j}+\frac{y^2}{\sigma_j^2} \rk  \rc\,,
\end{align}
with $X=[x,y]^{T}$.
Without loss of generality, we can assume that $i\geq j+1$. Then we have  using  the  joint density \eqref{eq.density}:
\begin{align*}
	\mathrm{Cov}(Y_i,Y_j)=& \int_{\mathbb{R}^2} \left[\ln(\alpha_i+\beta_i x)^2 \ln(\alpha_j+\beta_j y)^2 \right]\cdot\left[f_{i,j}(x,y)-f_i(x)f_j(y)\right] dxdy\\
	=&\int_{\mathbb{R}^2} \left[\ln(\alpha_i+\beta_i x)^2 \ln(\alpha_j+\beta_j y)^2 \right]\cdot \exp\lc-\frac{\tilde{\rho}_{ij}}{2}\lk \frac{x^2}{\sigma_i^2}+\frac{y^2}{\sigma_j^2} \rk \rc \\
	&\times \lk -\exp\lc\frac{\tilde{\rho}_{ij}}{2}\lk \frac{x^2}{\sigma_i^2}+\frac{y^2}{\sigma_j^2} \rk \rc+\frac{1}{\sqrt{1-\rho_{ij}^2}}\exp\lc {\bar{\rho}_{ij}}\cdot \frac{xy}{\sigma_i \sigma_j} \rc \rk  dF_i(x)dF_j(y)\,,
\end{align*}
where $\tilde{\rho}_{ij}=\frac{\rho_{ij}^2}{1-\rho_{ij}^2}$, $\bar{\rho}_{ij}=\frac{\rho_{ij}}{1-\rho_{ij}^2}$.
By  the  H\"{o}lder inequality, $\mathrm{Cov}(Y_i,Y_j)$ can be bounded by
\begin{align*}
	& \lc \int_{\mathbb{R}^2} \left[\ln(\alpha_i+\beta_i x)^2 \ln(\alpha_j+\beta_j y)^2 \right]^2 dF_i(x)dF_j(y) \rc^{\frac 12} \\
	&\times\bigg( \int_{\mathbb{R}^2} \bigg[ \exp\lc\frac{\tilde{\rho}_{ij}}{2}\lk \frac{x^2}{\sigma_i^2}+\frac{y^2}{\sigma_j^2} \rk \rc \\
	&\qquad\qquad\qquad-\frac{1}{\sqrt{1-\rho_{ij}^2}}\exp\lc{\bar{\rho}_{ij}}\cdot \frac{xy}{\sigma_i \sigma_j} \rc \bigg]^2 dF_i(x)dF_j(y) \bigg)^{\frac{1}{2}}=:A_{ij}^{\frac 12}\times B_{ij}^{\frac 12}\,.
\end{align*}
We proceed to estimate $A_{ij}$ and $B_{ij}$. To estimate $A_{ij}$ we only need to consider
\begin{align*}
	&\int_{\RR} \lk \ln(\alpha+\beta x)^2\rk^2 \times \frac{1}{\sqrt{2\pi \sigma}}\exp\lc-\frac{x^2}{2\sigma^2} \rc dx
	\asymp \int_{\RR} \lk \ln(\alpha+\beta\sigma x)^2\rk^2 \times e^{-\frac{x^2}{2}}dx \\
	=&\int_{\mid \alpha+\beta\sigma x\mid \leq 1}\lk \ln(\alpha+\beta\sigma x)^2\rk^2 \times e^{-\frac{x^2}{2}}dx+\int_{\mid \alpha+\beta\sigma x\mid \geq 1} \lk \ln(\alpha+\beta\sigma x)^2\rk^2 \times e^{-\frac{x^2}{2}}dx \\
	\leq&\int_{\mid \alpha+\beta\sigma x\mid \leq 1}\lk \ln(\alpha+\beta\sigma x)^2\rk^2 dx + \int_{\mid \alpha+\beta\sigma x\mid \geq 1} \lk \alpha+\beta\sigma x \rk^4 \times e^{-\frac{x^2}{2}}dx\,.
\end{align*}
Here, we neglect the subscripts of $\alpha_i$ and $\beta_i$ to simplify the notations. Obviously, there exists a constant $C$ such that $A_{i,j}\le  C$  since $\sigma=\mid \Delta  t\mid ^{ H} $ and $\alpha$ and $\beta$ defined by \eqref{e.2.1}-\eqref{e.2.2} are bounded above and bounded below away from $0$. Next, for $\mid i-j\mid \to \infty$, we deal with $B_{ij}$:
\begin{align*}
	&B_{ij}=\int_{\mathbb{R}^2} \Bigg[ \exp\lc\frac{\tilde{\rho}_{ij}}{2}\lk \frac{x^2}{\sigma_i^2}+\frac{y^2}{\sigma_j^2} \rk \rc \\
	&\qquad\qquad\quad-\frac{1}{\sqrt{1-\rho_{ij}^2}}\exp\lc \frac{\bar{\rho}_{ij}\cdot xy}{\sigma_i \sigma_j} \rc \Bigg]^2 \times \frac{1}{2\pi \sigma_i \sigma_j }\exp\lc -\frac{1}{2}\lk \frac{x^2}{\sigma_i^2}+\frac{y^2}{\sigma_j^2} \rk  \rc dxdy.
\end{align*}
By variable substitutions $x\rightarrow \sqrt 2\sigma_i x, y\rightarrow \sqrt 2 \sigma_j y$,  we have
\begin{align*}
	B_{ij}\leq\ &C\int_{\mathbb{R}^2} \bigg[\exp\lc\tilde{\rho}_{ij}\lk x^2+y^2 \rk \rc -\frac{1}{\sqrt{1-\rho_{ij}^2}} \exp\lc {2{\bar{\rho}_{ij}}\cdot xy} \rc \bigg]^2 \times \exp\lc -\lk x^2+y^2 \rk  \rc dxdy\\
	=\ &C\int_{\mathbb{R}^2} \bigg[\exp\lc 2\tilde{\rho}_{ij}\lk x^2+y^2 \rk \rc+\frac{1}{{1-\rho_{ij}^2}}\exp\lc {4{\bar{\rho}_{ij} }\cdot xy} \rc \\
	&\qquad\qquad\quad -\frac{2}{\sqrt{1-\rho_{ij}^2}}\exp\lc \tilde{\rho}_{ij}\lk x^2+y^2\rk+2\bar{\rho}_{ij}xy  \rc \bigg] \times \exp\lc -\lk x^2+y^2 \rk  \rc dxdy\,.
\end{align*}
The above three integrals can  be  explicitly   evaluated as
follows:
\[
 \int_{\mathbb{R}^2} \exp\lc 2\tilde{\rho}_{ij}\lk x^2+y^2 \rk \rc \times \exp\lc -\lk x^2+y^2 \rk  \rc dxdy=\frac{\pi}{1-2\tilde{\rho}_{ij}} \,,
\]
\[
 \frac{1}{{1-\rho_{ij}^2}}\int_{\mathbb{R}^2}\exp\lc {4\bar{\rho}_{ij} \cdot xy} \rc \times \exp\lc -\lk x^2+y^2 \rk  \rc dxdy=\frac{1}{{1-\rho_{ij}^2}}\cdot\frac{\pi}{\sqrt{1-4{\bar{\rho}_{ij}}^2}} \,,
\]
and
\begin{align*}
	\frac{2}{\sqrt{1-\rho_{ij}^2}}\int_{\mathbb{R}^2}\exp\lc \tilde{\rho}_{ij}\lk x^2+y^2\rk +2\bar{\rho}_{ij}xy \rc  \times\,& \exp\lc -\lk x^2+y^2 \rk  \rc dxdy\\
	=\,&\frac{2}{\sqrt{1-\rho_{ij}^2}}\cdot \frac{\pi}{{\sqrt{(1-\tilde{\rho}_{ij})^2-\bar{\rho}_{ij}^2}}} \,.
\end{align*}
Thus  to bound $B_{ij}$ we need to know the asymptotics of $\rho_{ij}$ and   $\bar \rho_{ij}$. First,
there exists a constant $C_{H}$ such that
\begin{align}\label{order}
\mathbb{E}[V_i^HV_j^H]&=\mathbb{E}\left[(B^{H}(t_{i+1})-B^{H}(t_i))(B^{H}(t_{j+1})-B^{H}(t_j))\right]\nonumber\\
&=\frac{1}{2}\left[(t_{i+1}-t_j)^{2H}-(t_{i+1}-t_{j+1})^{2H}-(t_i-t_j)^{2H}+(t_i-t_{j+1})^{2H}\right]\nonumber\\
&=\frac{(\Delta t)^{2H}}{2}\lk (i-j+1)^{2H}-2(i-j)^{2H}+(i-j-1)^{2H} \rk \\
&\leq \frac{(\Delta t)^{2H}}{2H(2H-1)} \cdot\lk (i-j-1)^{2H-2}\wedge 1 \rk \leq C_H \cdot(\Delta t)^{2H}\lk \mid i-j\mid ^{2H-2} \wedge 1 \rk\,.\nonumber
\end{align}
Hence,
\begin{equation}
0\le \rho_{ij}=\frac{\mathbb{E}[V_i^HV_j^H]}{\sigma_i \sigma_j}\le
C_H   \lk \mid i-j\mid ^{2H-2} \wedge 1 \rk    \quad \hbox{as $\mid i-j\mid \to \infty$}\,.
\end{equation}
Consequently,
\begin{equation}
\bar \rho_{ij}  \leq
  C\lk \mid i-j\mid ^{2H-2} \wedge 1 \rk\,,\quad \tilde  \rho_{ij}  \leq
  C \lk \mid i-j\mid ^{4H-4} \wedge 1 \rk   \quad \hbox{as $\mid i-j\mid \to \infty$}\,.
\end{equation}
Therefore, by the relation  $(1+x)^{\alpha}-1\asymp \alpha x$ as $x\to 0$,  we have
\begin{align*}
	B_{ij}\leq&C\lk {\frac{\pi}{1-2\tilde{\rho}_{ij}}+\frac{1}{{1-\rho_{ij}^2}}\cdot\frac{\pi}{\sqrt{1-4\bar{\rho}_{ij}^2}}-\frac{2}{\sqrt{1-\rho_{ij}^2}}\cdot \frac{\pi}{\sqrt{(1-\tilde{\rho}_{ij})^2-\bar{\rho}_{i,j}^2} }}\rk \\
	=&\frac{\pi}{\sqrt{1-\rho_{ij}^2}}\lk \frac{\sqrt{1-\rho_{ij}^2}}{1-2\tilde{\rho}_{ij}}+\frac{1}{\sqrt{1-\rho_{ij}^2}}\cdot \frac{1}{\sqrt{1-4\bar{\rho}_{ij}^2}}-\frac{2}{\sqrt{(1-\tilde{\rho}_{ij})^2-\bar{\rho}_{ij}^2}} \rk \\
	=&\frac{\pi}{\sqrt{1-\rho_{ij}^2}}\lk \lc\frac{\sqrt{1-\rho_{ij}^2}}{1-2\tilde{\rho}_{ij}}-1\rc+\lc\frac{1}{\sqrt{1-\rho_{ij}^2}}\cdot \frac{1}{\sqrt{1-4\bar{\rho}_{ij}^2}}-1\rc+2-\frac{2}{\sqrt{(1-\tilde{\rho}_{ij})^2-\bar{\rho}_{ij}^2}} \rk \\
	\leq & C\lk \mid 2\tilde{\rho}_{ij}-\frac 12 \rho_{ij}^2 \mid + \mid \frac 12(\rho_{ij}^2+4\bar{\rho}_{ij}^2-4\rho_{ij}^2
\bar{\rho}_{ij}^2) \mid +\lvert \frac 12(\tilde{\rho}_{ij}^2-\bar{\rho}_{ij}^2-2\tilde{\rho}_{ij}) \rvert \rk\\
\leq & C \mid i-j\mid^{4H-4}\,,
\end{align*}
when $\mid i-j\mid $ is sufficiently  large.
 As a result, we have
\begin{align}\label{eq.Cov_bd}
\mathrm{Cov}(Y_i,Y_j)\leq C\lk \mid i-j \mid ^{2H-2}\wedge 1\rk\,.
\end{align}
This completes the proof of  \eqref{Ineq.Cov} and  hence  we finish the proof of part (iii) of Theorem \ref{Thm_1}.
\end{proof}

\subsection{ Brownian motion case}
In this section we consider the case when $H=1/2$, namely,   Brownian motion   $B^H=B$. The equation \eqref{eq.SDE_FB} becomes
\begin{align}\label{eq.SDE_B.P}
dX(t)=-\lambda\kappa t^{\kappa-1}  X(t)dt+\mu X(t)\circ dB(t)\,,~~~X(0)=X_0\,,
\end{align}
  where $B$ is standard Brownian motion, $\lambda,\mu\in \mathbb{R}$ and $\kappa\geq 2H=1$. Here, we   assume that $X_0\neq 0$ with a positive  probability   and $\mid X_0\mid $ is square integrable.  We have $X(t)=X_0\exp(\lambda t^{\kappa}+\mu B(t))$ and
\begin{equation}\label{eq.L2_B.P}
	\mathbb{E}\mid X(t)\mid ^{2}=\mathbb{E}\mid X_0\mid ^2\exp\left(2(-\lambda t^{\kappa}+\mu^2 t)\right)\,.
\end{equation}
So the solution is stable if
 (i) $\kappa>1$ and $\lambda>0$ or  (ii) $\kappa=1$ and $-\lambda+\mid \mu\mid ^2<0$. Otherwise, the solution of \eqref{eq.SDE_B.P} is unstable.

For the above equation we can write  \eqref{STM_linear}  as
\begin{equation}\label{B.3}
X_{n+1} =\,\left(\frac{1-\kappa(1-\theta)\lambda (t_n)^{\kappa-1} \Delta t}{1+\kappa\theta\lambda (t_{n+1})^{\kappa-1}\Delta t}
+\frac{\mu V_n}{1+\kappa\theta\lambda (t_{n+1})^{\kappa-1}\Delta t}\right)X_{n} \,,
\end{equation}
with $t_{n}=n\cdot \Delta t$ and   $V_{n}=B(t_{n+1})-B(t_{n})$. Notice that $V_{n}$'s are mutual independent.  The  equation  \eqref{B.3} can also be rewritten as follows
\begin{align}\label{B.q4}
X_{n+1}=X_{0}\prod_{k=1}^{n}Z_{k}(\Delta t)=X_{0}\prod_{k=1}^{n}\left(\alpha_{k}
+\beta_{k}V_{k}\right)\,.
\end{align}
where
\begin{empheq}[left=\empheqlbrace]{align}
&\alpha_{n}:=\alpha_{n}(\theta,\lambda,\Delta t)=\, \frac{1-\kappa(1-\theta)\lambda (t_n)^{\kappa-1} \Delta t}{1+\kappa\theta\lambda (t_{n+1})^{\kappa-1}\Delta t}=\frac{1-\kappa(1-\theta)\lambda n^{\kappa-1} \Delta t^{\kappa}}{1+\kappa\theta\lambda (n+1)^{\kappa-1}\Delta t^{\kappa}}\,, \nonumber\\
&\beta_{n}:=\beta_{n}(\theta,\lambda,\mu,\Delta t)=\, \frac{\mu}{1+\kappa\theta\lambda (t_{n+1})^{\kappa-1}\Delta t}=\frac{\mu}{1+\kappa\theta\lambda (n+1)^{\kappa-1}\Delta t^{\kappa}}\,.
\nonumber
\end{empheq}
 Obviously,  we have the following.
 \begin{itemize}
   \item If $\kappa>1$, for every fixed $\Delta t>0$, $\lambda$ and $\mu$ (even for $\lambda>0$)
$$
\lim_{n\to \infty}\alpha_{n}=-\frac{1-\theta}{\theta},~~~~~~
\lim_{n\to \infty}\beta_{n}=0.
$$
Therefore, we have
\begin{align*}
	\EE[\mid X_{n+1}\mid ^2]=\,&\EE[\mid X_0\mid ^2]\prod_{k=1}^{n}\EE\left[\mid \alpha_{k}
+\beta_{k}V_{k}\mid ^2\right] \\
=\,&\EE[\mid X_0\mid ^2]\prod_{k=1}^{n}\lk \alpha_k^2+\beta_k^2 \cdot \Delta t \rk \asymp \lc\frac{1-\theta}{\theta}\rc^{2n}\to\begin{cases}
		0\,, &\text{if~} \frac 1 2<\theta\leq 1\,;\\
		\infty\,, &\text{if~} 0\leq \theta< \frac 1 2\,.
	\end{cases}
\end{align*}
   \item If $\kappa=1$, \eqref{eq.SDE_B.P} is reduced  to the standard stochastic test equation (see also \cite{Komori2012SROCK}). Then
$$
\alpha_{n}=\bar{\alpha}=\frac{1-(1-\theta)\lambda\Delta t}{1+\theta\lambda\Delta t}\,,\qquad
\beta_{n}=\bar{\beta}=\frac{\mu}{1+\theta\lambda\Delta t}\,.
$$
Thus,
$$
\mathbb{E}\mid X_{n+1}\mid ^2=\mathbb{E}(\bar{\alpha}+\bar{\beta}\cdot V_n)^2\mathbb{E}\mid X_n\mid ^2\,.
$$
In this sense, the numerical stability (or non-stability) depends on the condition
\begin{align*}
	&\bar{\alpha}^2+\bar{\beta}^2\cdot \Delta t<1 ~(\text{or }>1)\,,\\
	\Leftrightarrow\quad&(1-2\theta)\lambda^2 \Delta t+(-2\lambda+\mid \mu\mid ^2)<0 ~(\text{or }>0)\,.
\end{align*}
 \end{itemize}
Now, we can summarize the discussion above as the following proposition:
\begin{Pro}
	For the test equation \eqref{eq.SDE_B.P} and the STM \eqref{B.3}, we have
	\begin{enumerate}
	\item[(i)]	When $\kappa>1$, for any fixed $\lambda,~\mu$, then the STM \eqref{B.3} is mean square stable for the test equation \eqref{eq.SDE_B.P} if $\frac 12<\theta\leq 1$ and is \emph{not} mean square stable if $0\leq \theta\leq \frac 12$\,;
	\item[(ii)] When $\kappa=1$ and $-2\lambda+\mid \mu\mid ^2<0$, then the STM \eqref{B.3} is mean square stable for the test equation \eqref{eq.SDE_B.P} if either $\frac 12\leq \theta\leq 1$ for all $\Delta t>0$ or  $0\leq \theta< \frac 12$ for $\Delta t$ satisfying
	\begin{equation*}
		0<\Delta t<\frac{2\lambda-\mid \mu\mid ^2}{(1-2\theta)\lambda^2}\,;
	\end{equation*}
	\item[(iii)] When $\kappa=1$, $-2\lambda+\mid \mu\mid ^2>0$ and $0\leq \theta< 1/2$, then the STM \eqref{B.3} is \emph{not} mean square stable for the test equation \eqref{eq.SDE_B.P} for all $\Delta t>0$\,.
	\end{enumerate}
\end{Pro}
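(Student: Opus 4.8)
The plan is to exploit the essential simplification available when $H=1/2$: the increments $V_k=B(t_{k+1})-B(t_k)$ entering the product representation \eqref{B.q4} are mutually independent, each $N(0,\Delta t)$. So the first step is to take expectations in $X_{n+1}^2=X_0^2\prod_{k=1}^n(\alpha_k+\beta_k V_k)^2$ and use $\EE V_k=0$, $\EE V_k^2=\Delta t$ together with independence to get
\[
\EE[|X_{n+1}|^2]=\EE[|X_0|^2]\prod_{k=1}^n\bigl(\alpha_k^2+\beta_k^2\,\Delta t\bigr),
\]
where $0<\EE[|X_0|^2]<\infty$. This reduces the whole question to a deterministic product of the explicit quantities $\alpha_k,\beta_k$: the STM \eqref{B.3} is mean square stable if and only if $\prod_{k=1}^n(\alpha_k^2+\beta_k^2\Delta t)\to 0$, and $\EE|X_n|^2\to\infty$ if and only if this product diverges.

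For part (i), with $\kappa>1$, I would read off from \eqref{e.2.1}--\eqref{e.2.2} that the numerator and denominator of $\alpha_n$ are both of order $n^{\kappa-1}\Delta t^\kappa$, so $\alpha_n\to-\tfrac{1-\theta}{\theta}$, while $\beta_n\to 0$ at the polynomial rate $n^{-(\kappa-1)}$. Hence each factor $\rho_k:=\alpha_k^2+\beta_k^2\Delta t$ converges to $\bigl(\tfrac{1-\theta}{\theta}\bigr)^2$. If $\tfrac12<\theta\le 1$ this limit is $<1$, so $\rho_k\le r<1$ for all $k\ge k_0$, whence $\prod_{k=1}^n\rho_k\le\big(\prod_{k\le k_0}\rho_k\big)\,r^{n-k_0}\to 0$ and the scheme is mean square stable; if $0\le\theta<\tfrac12$ the limit exceeds $1$, so $\rho_k\ge r>1$ eventually and the product, hence $\EE|X_n|^2$, blows up. The borderline $\theta=\tfrac12$, where the limiting factor is exactly $1$, would be settled with the sharper expansion $\rho_k=1+O\big(k^{-\min\{1,\kappa-1\}}\big)$.

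For parts (ii) and (iii), with $\kappa=1$, the weight $n^{\kappa-1}\equiv 1$ disappears, so $\alpha_n\equiv\bar\alpha=\tfrac{1-(1-\theta)\lambda\Delta t}{1+\theta\lambda\Delta t}$ and $\beta_n\equiv\bar\beta=\tfrac{\mu}{1+\theta\lambda\Delta t}$ are independent of $n$ and $\EE|X_{n+1}|^2=(\bar\alpha^2+\bar\beta^2\Delta t)^n\,\EE|X_0|^2$ is a genuine geometric sequence. Thus mean square stability is equivalent to $\bar\alpha^2+\bar\beta^2\Delta t<1$; clearing the positive denominator by multiplying through by $(1+\theta\lambda\Delta t)^2$ and simplifying turns this (and the $=,>$ variants) into $(1-2\theta)\lambda^2\Delta t+(-2\lambda+|\mu|^2)<0$. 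Part (ii) then follows by splitting on the sign of $1-2\theta$ when $-2\lambda+|\mu|^2<0$: for $\theta\ge\tfrac12$ the left side is negative for every $\Delta t>0$, while for $0\le\theta<\tfrac12$ it is negative exactly for $0<\Delta t<\tfrac{2\lambda-|\mu|^2}{(1-2\theta)\lambda^2}$. Part (iii) follows since, when $-2\lambda+|\mu|^2>0$ and $0\le\theta<\tfrac12$, both summands on the left are nonnegative and the constant term is strictly positive, so the left side stays positive for all $\Delta t>0$.

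The only step that needs genuine care is the $\kappa>1$ case: since $\prod\rho_k$ has non-constant factors, one must quantify the convergence of $\alpha_n$ and $\beta_n$ to justify the comparison with a geometric sequence — but those rates are polynomial and elementary, so the argument is essentially the routine comparison sketched above; the $\kappa=1$ parts are pure algebra.
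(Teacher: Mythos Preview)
Your proposal is correct and mirrors the paper's own argument: both use the independence of the Brownian increments to factor $\EE|X_{n+1}|^2=\EE|X_0|^2\prod_k(\alpha_k^2+\beta_k^2\Delta t)$, then for $\kappa>1$ appeal to $\alpha_k\to-\tfrac{1-\theta}{\theta}$, $\beta_k\to 0$ to compare with a geometric sequence, and for $\kappa=1$ reduce to the single algebraic inequality $(1-2\theta)\lambda^2\Delta t+(-2\lambda+|\mu|^2)<0$. Your write-up is in fact a bit more careful than the paper's, which simply records the asymptotic $\asymp\bigl(\tfrac{1-\theta}{\theta}\bigr)^{2n}$ without isolating the borderline $\theta=\tfrac12$; your remark that this case needs the sharper expansion of $\rho_k$ is well placed.
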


\section{STM: Mean square nonlinear stability analysis}
In this section, we shall study the
$p$-th moments stability and the numerical stability of the solution to the general SDEs driven by fBm.

\subsection{Mean square stability}
{The existence and uniqueness problems of \eqref{gene_FSDE} have been studied extensively in the last two decades. For precise results, we refer \cite{FZ2021} and the references therein.  
Beyond the well-posedness, as we mentioned in Section 1, it seems  too complicated to find the long time asymptotic behavior of \eqref{gene_FSDE}.} 
To the best of our knowledge, there is few results on the convergence of $\EE[\mid X(t)\mid ^2]$ when $t$ goes to infinity. Thus, we focus on the following simplified SDEs with $g(t,X(t))=c(t)X(t)$ under the assumption \eqref{LG_f} and \eqref{LG_g} 
\begin{equation}\label{Sim_SDE}
	d X(t)=f(t,X(t))dt+c(t)X(t)dB^{H}(t)\,.
\end{equation}
In this case, \eqref{LG_g}  means $\mid c(t)\mid \leq \mu$ for some $\mu>0$.

\begin{Theo}
	Let $X(t)$ be the solution to SDE \eqref{Sim_SDE}, and let $p$, $\kappa$ and $\lambda$, $\mu$ satisfy
	\begin{equation}\label{Cond_MS.q}
	 {\text{(i)}\ \kappa>2H \text{~and~} \lambda>0 \quad\text{~or~}\quad  \text{(ii)}\ \kappa=2H \text{~and~} -\lambda+\frac{p}{2}\mu^2<0\,.}
\end{equation}
	 If $f(t,x)$ in \eqref{Sim_SDE} satisfies  \eqref{Lip_f} in Assumption \ref{gene-assum}
 and $c(t)$ satisfies $\mid c(t)\mid \leq \mu$ for $\mu>0$.  Then for  any   $p\geq 1$,
  $\EE\lvert X(t)\rvert^p\to 0$ as $t\to\infty$ under the condition \eqref{Cond_MS}.
\end{Theo}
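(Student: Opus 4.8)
The plan is to strip off the multiplicative noise by a Doss--Sussmann type change of variables, which works here precisely because $H>1/2$: the integral against $B^H$ is a Young (pathwise) integral, so the usual Leibniz and chain rules hold with \emph{no} It\^o correction. Set $\Xi(t):=\int_0^t c(s)\,dB^H(s)$ and $\Phi_t:=\exp(\Xi(t))$, so that $d\Phi_t=c(t)\Phi_t\,dB^H(t)$, $\Phi_0=1$, and put $Z_t:=\Phi_t^{-1}X_t=X_t\exp(-\Xi(t))$. Writing $X_t=\Phi_t Z_t$ and applying the product rule, the noise terms cancel against \eqref{Sim_SDE} and $Z_t$ is seen to solve the random ODE
\[
\dot Z_t=\Phi_t^{-1}f(t,\Phi_t Z_t),\qquad Z_0=X_0 .
\]

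I would then differentiate $Z_t^2$, which is absolutely continuous in $t$ so there is no issue at the zeros of $Z$: $\frac{d}{dt}Z_t^2=2\Phi_t^{-2}(\Phi_t Z_t)f(t,\Phi_t Z_t)=2\Phi_t^{-2}X_t f(t,X_t)$. The monotone condition \eqref{Lip_f} gives $X_t f(t,X_t)\le-\lambda\kappa t^{\kappa-1}X_t^2$, hence $\frac{d}{dt}Z_t^2\le-2\lambda\kappa t^{\kappa-1}Z_t^2$; therefore $t\mapsto Z_t^2 e^{2\lambda t^\kappa}$ is nonincreasing, so $Z_t^2\le X_0^2 e^{-2\lambda t^\kappa}$. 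Undoing the substitution and raising to the power $p/2$ yields the a.s.\ pathwise bound
\[
|X(t)|^p=\big(e^{2\Xi(t)}Z_t^2\big)^{p/2}\le|X_0|^p\exp\!\big(p\,\Xi(t)-p\lambda t^\kappa\big).
\]

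It remains to take expectations. Since $c$ is deterministic, $\Xi(t)$ is a centered Gaussian variable, and using $H(2H-1)|u-v|^{2H-2}\ge0$, $|c|\le\mu$ and $\int_0^t\!\int_0^t H(2H-1)|u-v|^{2H-2}\,du\,dv=\EE[(B^H(t))^2]=t^{2H}$ we obtain $\mathrm{Var}(\Xi(t))\le\mu^2 t^{2H}$. Hence $\EE[e^{p\Xi(t)}]=\exp(\tfrac{p^2}{2}\mathrm{Var}(\Xi(t)))\le\exp(\tfrac{p^2}{2}\mu^2 t^{2H})$, so
\[
\EE|X(t)|^p\le|X_0|^p\exp\!\Big(-p\lambda t^\kappa+\tfrac{p^2}{2}\mu^2 t^{2H}\Big).
\]
Under \eqref{Cond_MS.q}(i) one has $\kappa>2H$ and $\lambda>0$, so $t^\kappa$ dominates $t^{2H}$ and the exponent tends to $-\infty$; under \eqref{Cond_MS.q}(ii) one has $\kappa=2H$ and the exponent equals $p\big(-\lambda+\tfrac p2\mu^2\big)t^{2H}\to-\infty$. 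In both cases $\EE|X(t)|^p\to0$.

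The genuinely delicate points are (a) the rigorous justification of the Leibniz rule for $X_t=\Phi_t Z_t$ and of the chain rule applied to $x\mapsto x^2$ in the Young-integral setting --- this rests on the pathwise fractional calculus for $H>1/2$ of \cite{Hu13,Mishura2008book} together with the (implicit) continuity of $c$ and of the solution $X$, which also guarantees $\Phi_t^{-1}=\exp(-\Xi(t))$ is a well-defined continuous process --- and (b) verifying that $\Xi(t)$ is a well-defined Wiener integral against $B^H$ with the stated variance. Once these are in place the rest is just the Gronwall estimate above, so I expect (a) to be the main obstacle, though by now it is a fairly routine verification.
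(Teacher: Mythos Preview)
Your proof is correct and follows essentially the same Doss--Sussmann approach as the paper: set $Y_t=X_t\exp(-\int_0^t c(s)\,dB^H_s)$, use the monotone condition \eqref{Lip_f} together with Gronwall on the resulting random ODE, then take expectations via the Gaussian exponential moment formula to get the bound $\EE|X(t)|^p\le |X_0|^p\exp(-p\lambda t^\kappa+\tfrac{p^2}{2}\mu^2 t^{2H})$. The only cosmetic difference is that you differentiate $Z_t^2$ and then raise to the power $p/2$, whereas the paper first restricts to even $p$ and differentiates $Y_t^p$ directly; your variant handles all $p\ge 1$ in one stroke and is arguably a little cleaner.
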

\begin{Rem}
	When $p=2$, i.e. the mean square stable case, the condition \eqref{Cond_MS.q} coincides with \eqref{Cond_MS}.
\end{Rem}

\begin{proof}
We can assume that  $p\geq 2$ is  an even positive number.
Denote  $F_t=\exp[-\int_0^t c(s) dB^H_s]$ and $Y_t=F_t\cdot X(t)$.  Then by the chain rule  formula (e.g. \cite[Proposition 2.7]{Hu13} or \cite[Lemma 2.7.1]{Mishura2008book}) we have
\begin{align*}
	\frac{d}{dt} Y_t=F_t\cdot f(t,X(t)) = F_t\cdot f(t,(F_t)^{-1}Y_t) \,.
\end{align*}
Note that it is a deterministic ordinary differential equation for the function $t\to Y_t(\omega)$ for every $\omega\in\Omega$. Then by the condition \eqref{Lip_f} in Assumption \ref{gene-assum}, we   get
\begin{align*}
	\frac{d}{dt} Y_t^p &=pY_t^{p-1}\frac{dY_t}{dt}\\
&=pY_t^{p-1} (F_t)^{-1} F^2_t f(t,(F_t)^{-1}Y_t) \leq -p  \lambda\kappa t^{\kappa-1} \cdot Y_t^p\,.
\end{align*}
Thanks to Gronwall's inequality, we have
\begin{align*}
	Y_t^p \leq&\ Y_0^p \exp\lc-p\lambda\kappa\int_0^t s^{\kappa-1}ds \rc=X_0^p \exp(-p\lambda t^{\kappa})\,,
\end{align*}
and
 \begin{align*}
 	X(t)^p \leq&\ X_0^p \exp\lc -p\lambda t^{\kappa} +p \int_0^t  c(s) dB^H_s \rc\,.
 \end{align*}
Therefore, letting $C_H=H(2H-1)$,   we have 
\begin{align*}
	\EE [X(t)^p] \leq&\ C  \exp\lc -p\lambda t^{\kappa} + p^2 {C_H} \int_0^t \int_0^t c(s)\mid s-r\mid ^{2H-2} c(r)drds \rc \\
	\leq&\ C \exp\lc -p\lambda t^{\kappa} + \frac{p^2}{2} \mu^2 t^{2H} \rc\,.
\end{align*}
So we have that $\EE [X(t)^p]$ converges to $0$ under the condition \eqref{Cond_MS.q}. 
\end{proof}

\subsection{Numerical stability}
{ For the numerical stability of SDEs driven by fBm, we consider a more general
diffusion coefficient. More precisely, instead of $g(t,X)=c(t)X$ 
we allow the  diffusion term $g$ to be generally  nonlinear 
satisfying \eqref{LG_g}.   
We hope this will shed    light to  the 
stability of the original solution.}

 Now, we give the proof of Theorem \ref{Thm_2}.

\begin{proof}[Proof of Theorem \ref{Thm_2}]
From STM \eqref{STM}, we have
\begin{align}\label{STM_X_n+1}
X_{n+1}-\theta f(t_{n+1},X_{n+1})\Delta t= X_n+ (1-\theta)f(t_n,X_n)\Delta t + g(t_n,X_n)V_n^H \,.
\end{align}
By the condition \eqref{Lip_f}, we have
$$
\mid f(t,X)\mid ^2\geq (\lambda \kappa t^{\kappa-1})^2\mid X\mid ^2\,.
$$
We  bound  the square of left hand side of \eqref{STM_X_n+1} as
\begin{align}\label{left-formula}
\begin{split}
\mid X_{n+1}-&\theta f(t_{n+1},X_{n+1})\Delta t\mid ^2\\
& = (X_{n+1})^2+ \theta^2 \mid f(t_{n+1},X_{n+1})\mid ^2 (\Delta t)^2 -2 \theta \Delta t X_{x+1}f(t_{n+1},X_{n+1}) \\
&\geq (X_{n+1})^2 + (\theta~ \lambda~ \kappa~ t_{n+1}^{\kappa-1}~\Delta t)^{2} (X_{n+1})^2
+ 2 ~\theta~\lambda~\kappa~t_{n+1}^{\kappa-1}~\Delta t~ (X_{n+1})^2\\
&= (X_{n+1})^2 [1+\theta~ \lambda~ \kappa~ t_{n+1}^{\kappa-1}~\Delta t]^2\,.
\end{split}
\end{align}

\textbf{Step 1 ($\theta=1$):} With the condition \eqref{LG_g}, it is clear that the square of right hand side of \eqref{STM_X_n+1} can be bounded by
\begin{align}\label{right-formula_1}
\mid X_n+  g(t_n,X_n)V_n^H\mid ^2
\leq 2\lk (X_{n+1})^2+ \mu^2 X_n^2 (V_n^H)^2\rk \,.
\end{align}
Therefore, we have from \eqref{left-formula} and \eqref{right-formula_1}
\begin{align}\label{X_n+1.rec_1}
\mid X_{n}\mid ^2 \leq 2 \left[\alpha_n^2+\beta_n^2\cdot (V_n^H)^2\right] \mid X_{n-1}\mid ^2&\leq 2^{n} \prod_{j=1}^{n}\left[\alpha_j^2 +\beta_j^2(V_j^H)^2 \right] X_0^2\,,
\end{align}
where
\begin{equation*}
	\alpha_{n} = \frac{ 1}
{1+ \lambda\cdot \kappa(t_{n})^{\kappa-1}\Delta t}\,,\qquad
\beta_n = \frac{\mu }
{1+ \lambda\cdot \kappa(t_{n})^{\kappa-1}\Delta t}\,.
\end{equation*}
Let us rewrite $\prod_{j=1}^{n}\left[\alpha_j^2 +\beta_j^2(V_j^H)^2 \right]$ in \eqref{X_n+1.rec_1} as
$$
\prod_{j=1}^{2n} Z_j= \prod_{j=1}^{n} (\beta_jV_j^H+\iota \alpha_j) (\beta_j V_j^H-\iota \alpha_j)\,,
$$
with $\iota$ being the imaginary number  and
$$
Z_{2j-1}=\beta_{j}V_j^H+ \iota\alpha_j\,,\qquad\qquad Z_{2j}=\beta_{j}V_j^H- \iota\alpha_j\,.
$$
 Applying  Lemma \ref{Polarization} with $s_1=\cdots=s_{2n}=1$, $s=\sum_{j=1}^{2n}s_j=2n$, one has
 \begin{align}\label{gene_polar}
 \prod_{j=1}^{2n} Z_j=\frac{1}{2n!}\sum_{v_1=0}^{1}\cdots \sum_{v_{2n}=0}^{1}
 {1 \choose v_1}\cdots {1 \choose v_{2n}}(-1)^{\sum_{j=0}^{2n+1}v_j}\left[\sum_{j=1}^{2n}h_j Z_j \right]^{2n}\,,
 \end{align}
 where $h_j=\frac 12-v_j=\frac{(-1)^{v_j}}{2}$. Note that
\begin{align*}
\sum_{j=1}^{2n}h_jZ_j&=\sum_{j~ odd}h_j(\beta_{j} V_j^H+\iota \alpha_j) + \sum_{j~ even}h_j(\beta_j V_j^H- \iota\alpha_j)\\
&= \sum_{j=1}^{2n}h_j \beta_j\cdot V_j^H + \iota\left(\sum_{j~ odd}h_j\alpha_j -  \sum_{j~ even}h_j\alpha_j\right)\,.
\end{align*}
Thus, we get that
\begin{align*}
\mid \sum_{j=1}^{2n}h_jZ_j \mid^{2n} &= \left[\lft( \sum_{j=1}^{2n}h_j \beta_j\cdot V_j^H\rht)^2 + \lft( \sum_{j~ odd}h_j\alpha_j -  \sum_{j~ even}h_j\alpha_j \rht)^2\right]^{n}\\
&\leq 2^n \left[\lft( \sum_{j=1}^{2n}h_j \beta_j\cdot V_j^H \rht)^{2n}+
 \lft( \sum_{j~ odd}h_j\alpha_j -  \sum_{j~ even}h_j\alpha_j \rht)^{2n}\right]\,.
\end{align*}
Therefore, taking expectation on both sides  of \eqref{X_n+1.rec_1}, we obtain
\begin{align*}
\EE [\mid X_{n}\mid ^2]&\leq 2^{n}\cdot \EE \lk \bigg\vert \prod_{j=1}^{2n} Z_j \bigg\vert  \rk\\
&\leq \frac{2^{n}}{2n!}\sum_{v_1=0}^{1}\cdots \sum_{v_{2n}=0}^{1}
 {1 \choose v_0}\cdots {1 \choose v_{2n}}\EE\left[ \bigg\vert \sum_{j=1}^{2n}h_j Z_j \bigg\vert^{2n} \right]\\
&\leq \frac{2^{n}}{2n!} \sum_{v_1=0}^{1}\cdots \sum_{v_{2n}=0}^{1} \frac{1}{2^{n}}
\left[\EE \lc\sum_{j=1}^{2n}(-1)^{v_j} \beta_j\cdot V_j^H\rc^{2n}+
 \lc \sum_{j=0}^{2n}\alpha_j \rc^{2n}\right]\\
&=: \widetilde{I_1} + \widetilde{I_2} \,.
\end{align*}

 Denote $R=\sum_{j=1}^{2n}(-1)^{v_j} \beta_j\cdot V_j^H$.  Then $R$ is a Gaussian
 random variable with mean zero and variance $\sigma_{R}^2$ given by 
$$
\sigma_{R}^2=\EE\left(\sum_{j=1}^{2n}(-1)^{v_j} \beta_j\cdot V_j^H \right)^2 \leq C\cdot n^{2+2H-2\kappa} 
$$
 (see the computation in the appendix A).  Thus, we have 
\begin{align*}
	\EE\left(\sum_{j=1}^{2n}(-1)^{v_j} \beta_j\cdot V_j^H \right)^{2n}
 =&\ 2^{n} \Gamma(n+1/2)\cdot (\sigma_{R})^{2n} \\
 \leq&\ C^n 2^{n} \Gamma(n+1/2)\cdot n^{(2+2H-2\kappa)n} \,.
\end{align*}
By   Stirling's formula, we further have
\begin{align*}
\widetilde{I_1}&\leq \frac{2^{n}}{2n!} \cdot \frac{2^{2n}}{2^{n}} \cdot
 \EE\left(\sum_{j=1}^{2n}(-1)^{v_j} \beta_j\cdot V_j^H \right)^{2n}\\
&\leq \frac{4^{n}C^{n}}{2n!}\cdot  \Gamma(n+1/2)\cdot n^{n(2+2H-2\kappa)} \\
& \asymp \frac{1}{\blc\frac{2n}{e} \brc^{2n}}\cdot n^{n(2+2H-2\kappa)} \blc\frac{n-\frac 12}{e} \brc^{n-\frac 12}\\
& \asymp C^n \cdot n^{n(1+2H-2\kappa)} \to 0\,, 
\end{align*}
as $n\to \infty$ since $\kappa\geq 2H>1$.
For the term $\widetilde{I_2}$, as $n\to \infty$, we also have
\begin{align*}
\widetilde{I_2}&\leq \frac{2^{n}}{2n!} \cdot \frac{2^{2n}}{2^{n+1}} \lc\sum_{j=1}^{2n}\alpha_j \rc^{2n} \leq \frac{C^{n}\cdot n^{2(2-\kappa)n}}{\sqrt{2\pi(2n)}\blc\frac{2n}{e} \brc^{2n}}\asymp C^n\cdot n^{2(1-\kappa)n}\to 0 \,.
\end{align*}

\textbf{Step 2 ($\theta<1$):} Similar to \eqref{right-formula_1}, it follows with additional condition \eqref{LG_f} that the right hand side of \eqref{STM_X_n+1} can be bounded by
\begin{align}\label{right-formula_2}
\begin{split}
\mid X_{n}+&(1-\theta)f(t_n,X_n)\Delta t+ g(t_n,X_n)V_n^H\mid ^2\\
& \leq 3\left(X_n^2 + \mid (1-\theta)f(t_n,X_n)\Delta t\mid ^2 + \mid g(t_n,X_n)V_n^H\mid ^2 \right) \\
&\leq 3\left(X_n^2 + (1-\theta)^2 (\bar{\lambda}~\kappa~ t_{n+1}^{\kappa-1}~\Delta t)^2 X_n^2
 + \mu^2 X_n^2 (V_n^H)^2 \right)\,.
\end{split}
\end{align}
Combining \eqref{left-formula} and \eqref{right-formula_2}, we have
\begin{align*}
[1+ \theta~\lambda~\kappa~t_{n+1}^{\kappa-1}~\Delta t]^2 (X_{n+1})^2  \leq
3 \left(1+ (1-\theta)^2(\bar{\lambda}~\kappa~t_{n+1}^{\kappa-1}~\Delta t)^2 +\mu^2 (V_n^H)^2\right) (X_n)^2\,. 
\end{align*}
We can the above inequality as 
\begin{align*}
(X_{n})^2 \leq 3 \left[\alpha_n^2+\beta_n^2\cdot (V_n^H)^2\right] (X_{n-1})^2\,,
\end{align*}
where  $\alpha_n$ and $\beta_n$ are given  by
\begin{equation}\label{def_al.beta}
	\alpha_{n}^2 = \frac{ 1+ (1-\theta)^2(\bar{\lambda}~\kappa~t_{n}^{\kappa-1}~\Delta t)^2  }
{[1+ \theta~\lambda~\kappa~t_{n}^{\kappa-1}~\Delta t]^2}\,,\qquad
\beta_n^2 = \frac{\mu^2}
{[1+ \theta~\lambda~\kappa~t_{n}^{\kappa-1}~\Delta t]^2}\,.
\end{equation}
Therefore, 
\begin{align}\label{X_n+1.rec_2}
X_{n}^2 &\leq 3^{n} \prod_{j=1}^{n}\left(\alpha_j^2 +\beta_j^2(V_j^H)^2 \right) X_0^2 \nonumber \\
&= 3^{n} \prod_{j=1}^{n} (\beta_jV_j^H+\iota \alpha_j) (\beta_jV_j^H-\iota \alpha_j)\,.
\end{align}
Thus by the same procedure as in  \textbf{Step 1 ($\theta=1$)}, taking expectation on both sides of \eqref{X_n+1.rec_2} gives
\begin{align*}
\EE [\mid X_{n}\mid ^2]&\leq 3^{n}\cdot \EE \lk  \bigg\vert \prod_{j=1}^{2n} Z_j \bigg\vert  \rk\\
&\leq \frac{3^{n}}{2n!} \sum_{v_1=0}^{1}\cdots \sum_{v_{2n}=0}^{1} \frac{1}{2^{n}}
\left[\EE \lc\sum_{j=1}^{2n}(-1)^{v_j} \beta_j\cdot V_j^H \rc^{2n}+
 \lc \sum_{j=1}^{2n}\alpha_j \rc^{2n}\right]\\
&=: \widetilde{I_3} + \widetilde{I_4} \,.
\end{align*}

 We can prove that the term $\widetilde{I_3}$ converges to $0$ by the same technique used for the term $\widetilde{I_1}$ in  \textbf{Step 1 ($\theta=1$)}.
For the term $\widetilde{I_4}$, we further have
\begin{align*}
\widetilde{I_4}&\leq \frac{3^{n}}{(2n)!} \cdot \frac{2^{2n}}{2^{n+1}} \blc\sum_{j=1}^{2n}\alpha_j \brc^{2n}\\
&\asymp \frac{6^{n}}{\sqrt{2\pi\cdot 2n}\blc\frac{2n}{e} \brc^{2n}} \cdot (2n)^{2n}
\left(\frac{1-\theta}{\theta}\cdot \frac{\bar{\lambda}}{\lambda} \right)^{2n}\,.
\end{align*}
Obviously, we should require that
\begin{align}\label{stability-condition}
\sqrt{6}\cdot e\cdot \frac{1-\theta}{\theta}\cdot \frac{\bar{\lambda}}{\lambda} <1
\end{align}
to ensure $\widetilde{I_4}\to 0$ as $n\to \infty$.  The inequality \eqref{stability-condition} is equivalent to
 $\theta > \frac{\sqrt{6}\cdot e\cdot \frac{\bar{\lambda}}{\lambda}}{\sqrt{6}\cdot e\cdot \frac{\bar{\lambda}}{\lambda}+1}
 \geq \frac{\sqrt{6}e}{\sqrt{6}e+1}\approx 0.87$ since $\bar{\lambda}\geq \lambda$.  This completes the proof of Theorem \ref{Thm_2}. 
\end{proof}

\section{Numerical Experiments}
We shall  carry simulations for the following three equations. 
%

\begin{Exa}\label{example1}
Consider the following linear SDEs driven by fBm
\begin{equation}\label{4.1}
dX(t)=\kappa\cdot\lambda\cdot t^{\kappa-1}  X(t)dt+\mu X(t)dB^{H}(t)\,,
\end{equation}
with initial value $X(0)=3$.

\begin{figure}[htb]
\centering
\includegraphics[scale=0.8]{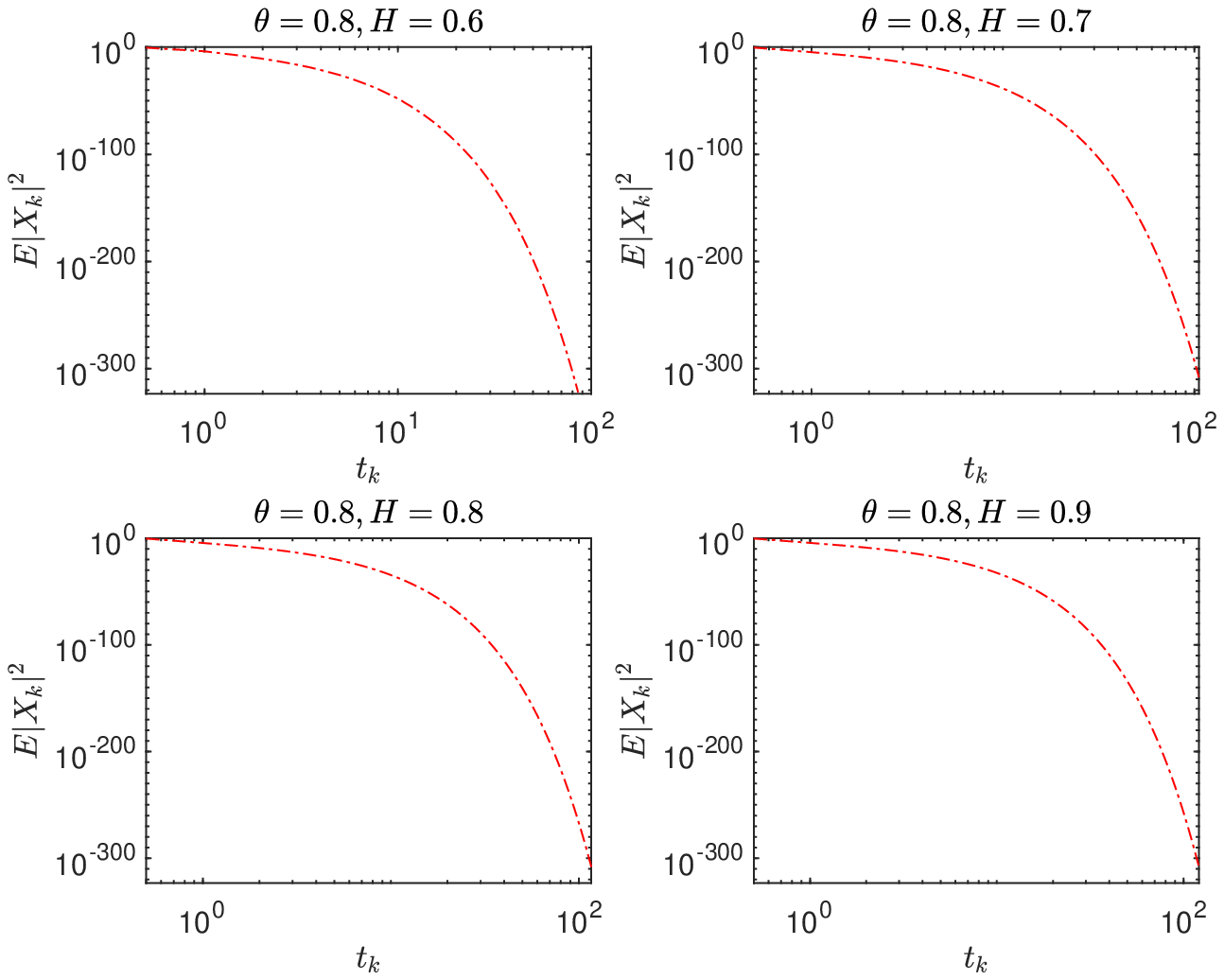}
\caption{Mean square of the STM \eqref{STM_linear} with $\theta=0.8$ for Example \ref{example1} with $\lambda=-9, \mu=2$.}
\label{stable}
\end{figure}

\begin{figure}[htb]
\centering
\includegraphics[scale=0.8]{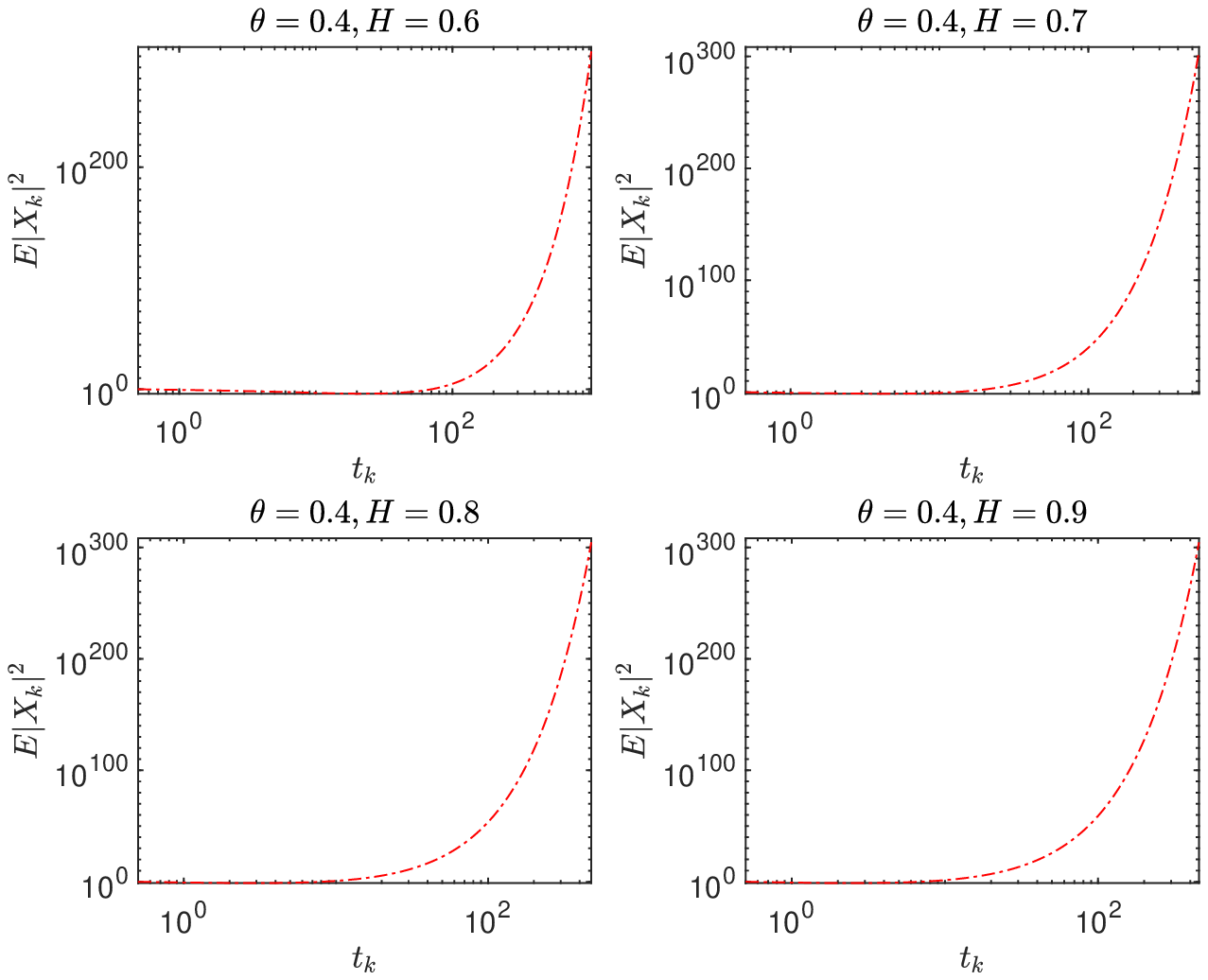}
\caption{Mean square of the STM \eqref{STM_linear} with $\theta=0.4$ for Example \ref{example1} with $\lambda=-9, \mu=2$.}
\label{unstable}
\end{figure}

\begin{figure}[htb]
\centering
\includegraphics[scale=0.8]{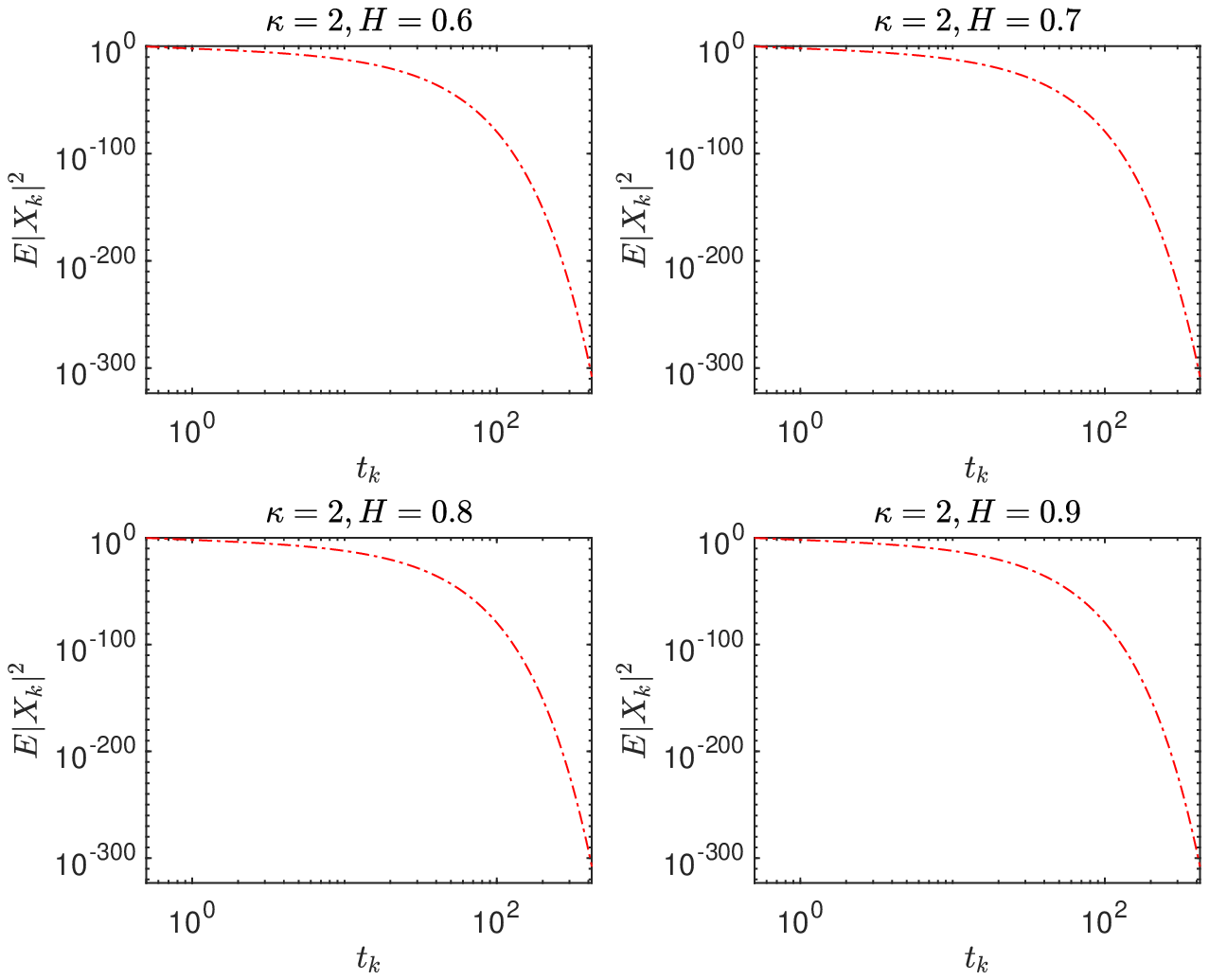}
\caption{Mean square of the STM \eqref{STM_linear} with $\theta=0.6$ for Example \ref{example1} with $\kappa=2, \lambda=-9, \mu=2$.}
\label{stable3}
\end{figure}

\begin{figure}[htb]
\centering
\includegraphics[scale=0.8]{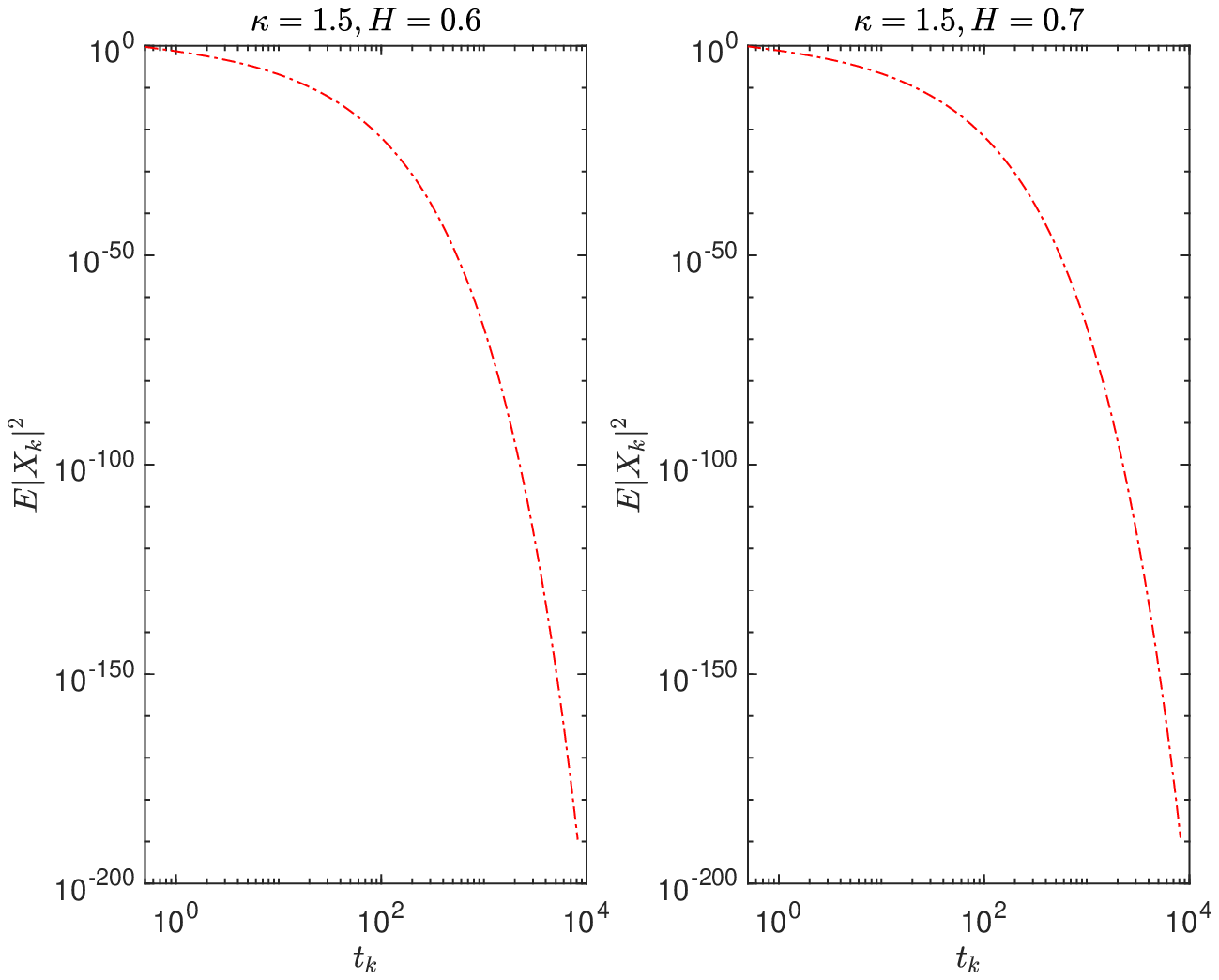}
\caption{Mean square of the STM \eqref{STM_linear} with $\theta=0.5$ for \eqref{4.1} with $\lambda=-9, \mu=2$.}
\label{3}
\end{figure}
\end{Exa}

\begin{Exa}\label{example2}
Consider the following nonlinear SDEs driven by fBm
\begin{equation}\label{4.2}
dX(t)=-\lambda\cdot \kappa\cdot t^{\kappa-1}  X(t)- X^3(t) dt+\mu X(t)dB^{H}(t)\,,
\end{equation}
with initial value $X(0)=3$.

\begin{figure}[htb]
\centering
\includegraphics[scale=0.8]{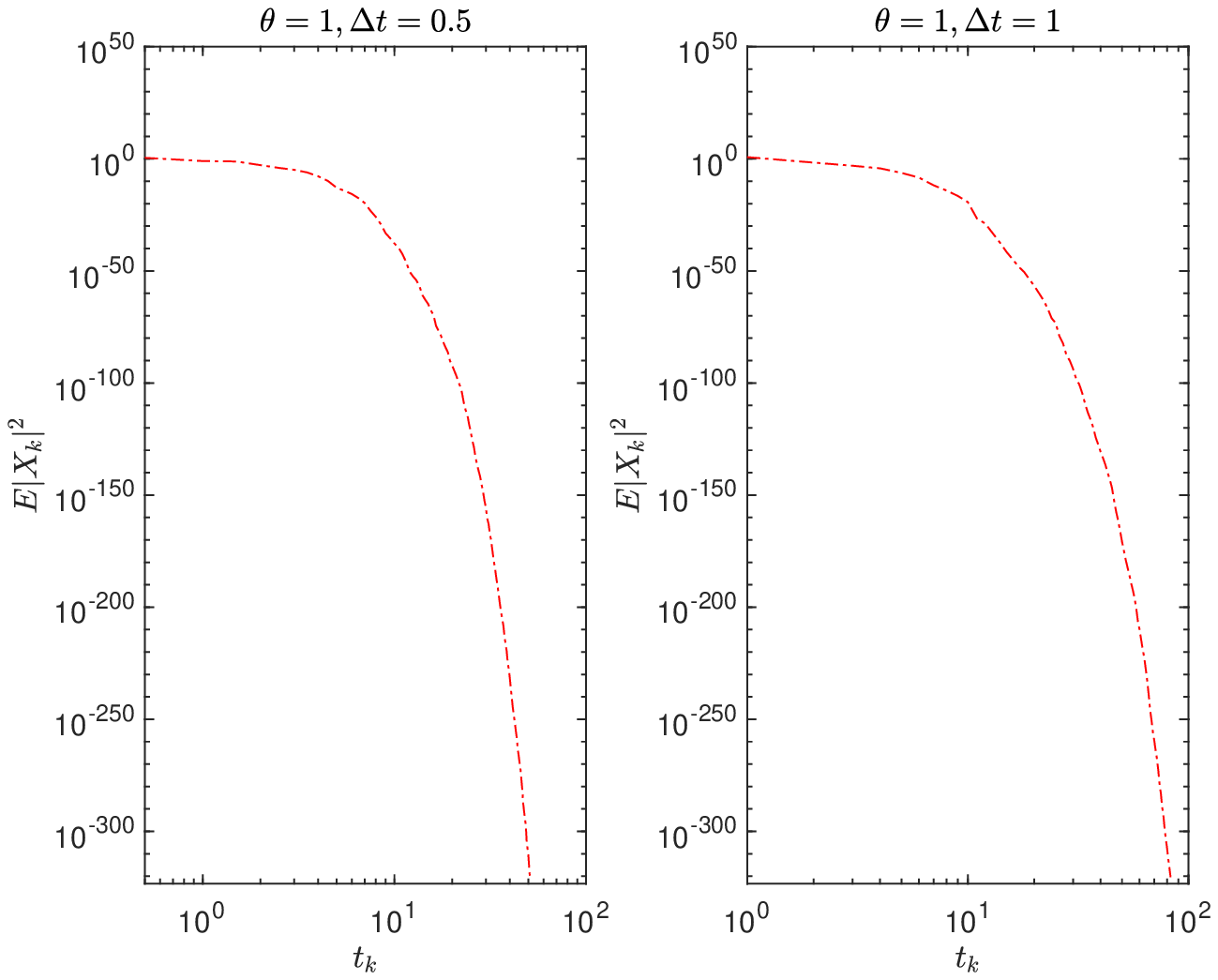}
\caption{Mean square of the STM \eqref{STM} with $\theta=1$ for Example \ref{example2}
 (left:$\Delta t=0.5$; right: $\Delta t=1$).}
\label{4}
\end{figure}

\end{Exa}

\begin{Exa}\label{example3}
Consider the following nonlinear SDEs driven by fBm
\begin{equation}\label{4.3}
dX(t)=-\lambda\cdot \kappa\cdot t^{\kappa-1}  X(t)- X^3(t) dt+ (X(t)+\sin(X(t))) dB^{H}(t)\,,
\end{equation}
with initial value $X(0)=3$.

\begin{figure}[htb]
\centering
\includegraphics[scale=0.8]{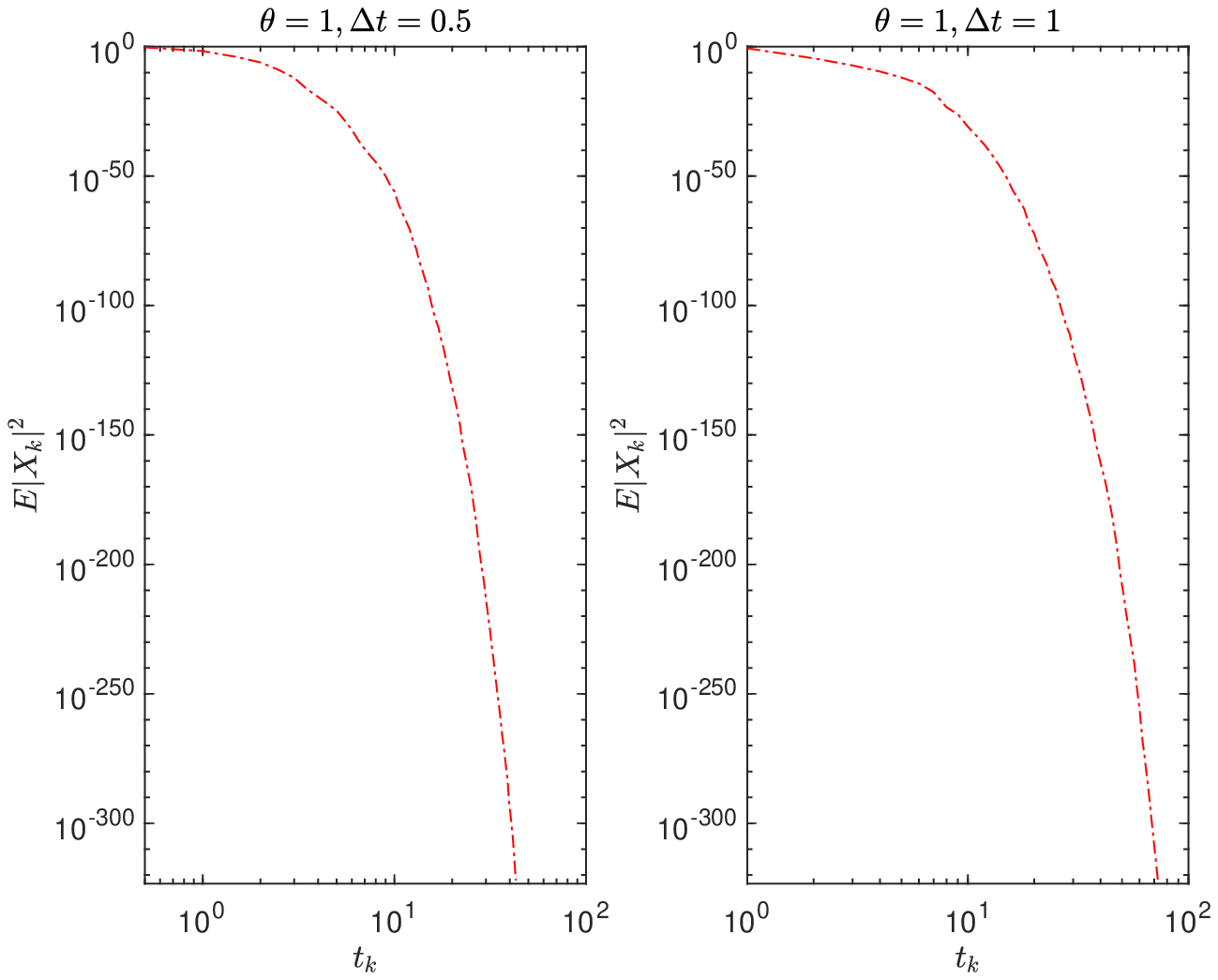}
\caption{Mean square of the STM \eqref{STM} with $\theta=1$ for Example \ref{example3}
 (left:$\Delta t=0.5$; right: $\Delta t=1$).}
\label{5}
\end{figure}

\end{Exa}

For the first test, we first fix $\lambda=-9, \mu=2, \kappa=2H$ and apply the stochastic theta  method with $\theta=0.8$ and $\theta=0.4$ for Example \ref{example1}
with different Hurst parameters $H$, respectively.
We take the stepsize $\Delta t=0.5$,
and the mean square of the numerical solutions over 5000
fBm samples are displayed in Figure \ref{stable} and Figure \ref{unstable} on a log-log scale, respectively.
 Besides, we also let $\kappa=2>\frac 32$, and the corresponding results of numerical solution are shown
 in Figure \ref{stable3}.
The expected
stable and unstable behaviors verify our theoretical results.

 For the test for Example \ref{example2}, we choose $\lambda=3$, $\kappa=2$,
 $\mu=4$ and $H=0.6$. It is easy to verify that the coefficients of the equation
 satisfy \eqref{Lip_f} and \eqref{LG_g} in Assumption \ref{gene-assum}. We take $\theta=1$ and
 $\Delta t= 0.5$ and $1$, respectively. The mean square of the
 numerical solutions are displayed in Figure \ref{4}. As expected, the
 stable behavior of the numerical solution is in agreement with Theorem \ref{Thm_2}.
 
 Moreover, we consider a more general diffusion term $g(t,X)=X(t)+\sin(X(t))$ 
 instead of $g(t,X)= c(t)X(t)$ with $c(t)\leq \mu$,
 in Example \ref{example3}. We take $\lambda=3$, $\kappa=2$,
  and $H=0.8$.  Figure \ref{5} depicts the mean square of the
 numerical solutions with $\Delta t= 0.5$ and $1$. The numerical 
 results illustrate that the STM with $\theta=1$ is also stable in this case. 
 We hope the numerical results can shed some light on the
 asymptotic property of the solution of the nonlinear equations 
 $
 dX(t) = f(t,X(t))dt + g(t,X(t))dB^H(t)\,.
 $

%

\section{Concluding remarks}
 This work first  focuses on the mean square stability of the stochastic theta method  for  the time non-homogeneous linear test equation driven by fBm,
$$
dX(t)=\lambda\kappa t^{\kappa-1}  X(t)dt+\mu X(t)dB^{H}(t)\,,~~~X(0)=3\,,
$$
  whose solution is
 stable in mean square sense.
For $\kappa\geq 2H$, it is proved that  the mean square
 A-stability of STM \eqref{STM_linear} is achieved for $\theta\geq \frac{\sqrt{3/2}\cdot e}{\sqrt{3/2}\cdot e+1}$,
  and the stochastic theta method cannot preserve the
 stability property of the test equation for $\theta<0.5$ in the sense of almost surely and mean square.
{Moreover, if $\kappa> 3/2$ and $\frac{1}{2}< \theta\leq 1$, then
the STM \eqref{STM_linear} is  mean square stable for the above  test equation.}
To illustrate our theoretical  results,
we give some simulation results for the equation \eqref{4.1}  with
$\lambda=-9, \mu=2, \Delta t=0.5 $
with different $H$ and $\theta$. The simulation results coincide well with our theoretical claims.
On the other hand,
we currently     are not  able to  use  our methods  to deal with the case  $\frac{1}{2}\leq \theta<\frac{\sqrt{3/2}\cdot e}{\sqrt{3/2}\cdot e+1}$ when $  2H\le \kappa\le 3/2$.  In this case, we simulate the equation \eqref{4.1}
with  $\lambda=-9, \mu=2,  \kappa= 1.4$ to
 test the stability by applying the  stochastic theta method with $\theta=0.5$, $\Delta t=0.5$ over the time interval
 $0 \leq t\leq 2^{13}$,
 the numerical results in Figure \ref{3}  show  that the method is still
 stable for $\theta=\frac 12$. Thus,
 we conjecture that  when  $2H\le \kappa\le 3/2$ and    $\frac{1}{2}\leq \theta<\frac{\sqrt{3/2}\cdot e}{\sqrt{3/2}\cdot e+1}$  the
 stochastic theta method is still mean square stable and this is our
   future research. { Finally, we also study the stability of the STM for nonlinear non-autonomous case
\[
 dX(t)=f(t,X(t))dt+g(t,X(t))dB(t)\,.
\]
 Under some conditions on the coefficients $f$ and $g$,
 it is proved that the STM method is stable when $\theta=1$. Moreover, under a stronger condition
on the coefficient of drift term $f$, the STM method is stable when
 $\theta> \frac{\sqrt{6}e\bar{\lambda}/\lambda}{\sqrt{6}e\bar{\lambda}/\lambda+1}$
 (where $\lambda$ and $\bar{\lambda}$ are defined \eqref{Lip_f} and \eqref{LG_f} in
 Assumption \ref{gene-assum}, respectively).}

\appendix
\section{Proof of $z^{2}=\frac{\mathsf{\tilde{m}}_n^2}{2\tilde{\sigma}_n^2}\gg n$}
In what follows, we   show that $z^{2}=\frac{\mathsf{\tilde{m}}_n^2}{2\tilde{\sigma}_n^2}\geq \frac{C\cdot n^{2}}{n^{2-2H}}\asymp n^{2H}\gg n$
 as $n\to \infty$.
 Note that
\begin{align*}
	\tilde{\mu}_n:=\tilde{\mu}_n(v_1,\cdots,v_n)=\,&\sum_{i=1}^n (1-v_i)\cdot \mu_i=\sum_{i=1}^n (1-v_i)\cdot \alpha_i
\,,\quad \mathsf{\tilde{m}}_n:=\tilde{\mu}_n(0,\cdots,0)=\sum_{i=1}^n \alpha_i\,, \\
	\tilde{\sigma}^2_n:=\tilde{\sigma}^2_n(v_1,\cdots,v_n)=\,&\EE\lk \Big\lvert \sum_{i=1}^n (1-v_i)\cdot \beta_i\cdot V_i^H  \Big\rvert ^2 \rk=\sum_{i,j=1}^{n} (1-v_i)(1-v_j)\cdot\beta_i\beta_j\cdot \EE[V_i^H V_j^H]\,.
\end{align*}
By  the property of fBm one can get with notation $\tilde{\beta}_j(\Delta t):=(1-v_j)\beta_{j}(\Delta t)$
\begin{align}\label{qq1.2}
 \tilde{\sigma}^2_n
 &=\sum\limits_{m,j=0}^{n}\tilde{\beta}_{m}(\Delta t)\tilde{\beta}_{j}(\Delta t)\mathbb{E}(V_{m}^{H}V_{j}^{H})\nonumber\\
 &=\frac{(\Delta t)^{2H}}{2}\sum\limits_{m,j=0}^{n}\tilde{\beta}_{m}(\Delta t)\tilde{\beta}_{j}(\Delta t)\lk \mid m-j+1\mid ^{2H}+\mid m-j-1\mid ^{2H}-2\mid m-j\mid ^{2H} \rk \,.
\end{align}
When $n$ and $\mid m-j\mid $ are large enough, we have
\begin{align*}
	\mid m-j+1\mid ^{2H}+&\mid m-j-1\mid ^{2H}-2\mid m-j\mid ^{2H} \\
	=&\mid m-j\mid ^{2H}\cdot \lk  \Big\lvert 1+\frac{1}{m-j} \Big\rvert^{2H}+ \Big\lvert 1-\frac{1}{m-j} \Big\rvert^{2H}-2 \rk\asymp \mid m-j\mid ^{2H-2}\,.
\end{align*}
 Therefore, we can bound \eqref{qq1.2} by
\begin{align}
	\tilde{\sigma}^2_n=&\frac{(\Delta t)^{2H}}{2}\sum\limits_{m,j=0}^{n}\tilde{\beta}_{m}(\Delta t)\tilde{\beta}_{j}(\Delta t)\lk \mid m-j+1\mid ^{2H}+\mid m-j-1\mid ^{2H}-2\mid m-j\mid ^{2H} \rk \nonumber\\
	\leq& C (\Delta t)^{2H}\sum\limits_{m\neq j}^{n} \tilde{\beta}_{m}(\Delta t)\tilde{\beta}_{j}(\Delta t) \mid m-j\mid ^{2H-2} \leq C (\Delta t)^{2H} \lc \sum\limits_{m=1}^{n} \mid \tilde{\beta}_{m}(\Delta t)\mid ^{\frac{1}{H}} \rc^{2H}\,,\label{qq1.3}
\end{align}
where we have used the discrete type Hardy-Littlewood-Sobolev inequality (see e.g. Theorem 381 in \cite{HLPIneq}) in the last step.

For any given $\Delta t>0$ (and $\lambda+\mid \mu\mid ^2<0$), one   observes  that from \eqref{qq1.3} 
\begin{align}
\lc\sum\limits_{m=0}^{n} \vert \tilde{\beta}_{m}(\Delta t)\vert^{\frac{1}{H}}\rc^{2H}\leq&\lc\sum_{m=0}^{n} \bigg\vert \frac{\mu}{1-\kappa\theta\lambda (m+1)^{\kappa-1}\Delta t^{\kappa}} \bigg\vert^{\frac{1}{H}}\rc^{2H} \nonumber\\
=&\lc \frac{\mu}{\kappa\theta \lambda (\Delta t)^\kappa}\rc^2 \lc\sum_{m=0}^{n} \bigg\vert \frac{1}{(m+1)^{\kappa-1}-\frac{1}{\kappa\theta \lambda (\Delta t)^\kappa}} \bigg\vert^{\frac{1}{H}}\rc^{2H} \nonumber\\
\leq & C  \lc\frac{\mu}{\kappa\theta \lambda (\Delta t)^\kappa}\rc^2 \cdot n^{2(1-\kappa)+2H}= \lc\frac{\mu}{\kappa\theta \lambda (\Delta t)^\kappa}\rc^2 \cdot n^{2+2H-2\kappa}\,.
\end{align}
Thus, $\tilde{\sigma}^2_n\leq C\cdot n^{2+2H-2\kappa} (\leq n^{2-2H}) \to 0$ as $n\to \infty$. Consequently,
$$
z^{2}=\frac{\mathsf{\tilde{m}}_n^2}{2\tilde{\sigma}_n^2}\geq  \frac{C\cdot n^{2}}{n^{2-2H}}\left(\frac{1-\theta}{\theta}\right)^{2}\asymp  (n+1)^{2H}\,.
$$

\section{Confluent Hypergeometric Functions}
\setcounter{equation}{3}

In this section, we gather some important properties of Kummer's confluent hypergeometric functions $\Phi(a,b,z)$ that are used in the main body of this work. The reader can find more details in Chapter 13 of \cite{NIST}.  Kummer's confluent hypergeometric functions $\Phi(a,b,z)$ is defined as
\begin{equation}\label{C.Kummer_def}
	\Phi(a,b,z)=\sum_{k=0}^{\infty}\frac{(a)_k}{(b)_k k!}z^k=1+\frac{a}{b}z+\frac{a(a+1)}{b(b+1)2!}z^2+\cdots\,.
\end{equation}
The following identity is called Kummer's transformation (see e.g.  13.2.29 in \cite{NIST})
\begin{equation}\label{C.Kummer_trans}
	\Phi(a,b,z)=e^z \Phi(b-a,b,-z)\,.
\end{equation}
A differentiation formula related to $\Phi(a,b,z)$ is helpful to us (13.3.20 in \cite{NIST}):
\begin{equation}\label{C.Kummer_diff}
	\frac{d^n}{dz^n}[e^{-z}\Phi(a,b,z)]=(-1)^n \frac{(b-a)_n}{(b)_n}\Phi(a,b+n,z)\,.
\end{equation}
Kummer's confluent hypergeometric functions $\Phi(a,b,z)$ can be represented by  the so-called parabolic cylinder functions $U(a,z)$ (13.6.14 and 13.6.15 in \cite{NIST}):
\begin{align}
	\Phi(a/2+1/4,1/2,z^2/2)=&\frac{2^{\frac a2-\frac 34}\Gamma(\frac a2+\frac 34)e^{\frac{z^2}{4}}}{\sqrt{\pi}}\times[U(a,z)+U(a,-z)]\,;\label{C.Kummer_Para1} \\
	\Phi(a/2+3/4,3/2,z^2/2)=&\frac{2^{\frac a2-\frac 54}\Gamma(\frac a2+\frac 14)e^{\frac{z^2}{4}}}{\sqrt{\pi}z}\times[U(a,-z)-U(a,z)]\,.\label{C.Kummer_Para2}
\end{align}
Recall the integral representation of the parabolic cylinder function $U(a,z)$ by 12.5.1 in \cite{NIST}
\begin{equation}\label{C.Para}
	U(a,z)\,=\,\frac{\exp(-\frac{z^2}{4})}{\Gamma(\frac 12+a)}\int_{0}^{\infty} w^{a-\frac 12} \exp(-w^2/2-zw)dw\,,\qquad ~ Re(a)>-\frac 12\,.
\end{equation}
Lastly, the Poincar\'e-type asymptotic forms of confluent hypergeometric function hold (see 13.2.23 in \cite{NIST}):
\begin{equation}\label{C.Kummer_Asymp}
	\mathbb{M}(a,b,z)=\frac{1}{\Gamma(b)}\Phi(a,b,z)\asymp \frac{z^{a-b}}{\Gamma(a)}\exp(z)\,,\qquad\text{as }z\to \infty\,.
\end{equation}


\section*{Funding}
M. Li is supported by
the Fundamental Research Funds for the Central Universities,
China University of Geosciences (Wuhan, Grant number : CUG2106127 and CUGST2), China.
C. Huang is supported by the National Science Foundation of China, No. 11771163 and 12011530058.
Y. Hu is supported by    Natural Sciences and Engineering Research Council of Canada
  discovery fund and by a startup fund of University of Alberta.

\bibliographystyle{plain}
\bibliography{MS_stability_sep}

\begin{thebibliography}{10}

\bibitem{BHOZ08}
Francesca Biagini, Yaozhong Hu, Bernt {\O}ksendal, and Tusheng Zhang.
\newblock {\em Stochastic calculus for fractional {B}rownian motion and
  applications}.
\newblock Probability and its Applications (New York). Springer, London, 2008.

\bibitem{Weather2002}
Dorje~C. Brody, Joanna Syroka, and Mihail Zervos.
\newblock Dynamical pricing of weather derivatives.
\newblock {\em Quant. Finance}, 2(3):189--198, 2002.

\bibitem{FZ2021}
Xiliang Fan and Shao-Qin Zhang.
\newblock Moment estimates and applications for {SDE}s driven by fractional
  {B}rownian motions with irregular drifts.
\newblock {\em Bull. Sci. Math.}, 170:Paper No. 103011, 33, 2021.

\bibitem{Guasoni2006FBM}
Paolo Guasoni.
\newblock No arbitrage under transaction costs, with fractional {B}rownian
  motion and beyond.
\newblock {\em Math. Finance}, 16(3):569--582, 2006.

\bibitem{Guo2013Flangevin}
Peng Guo, Caibin Zeng, Changpin Li, and YangQuan Chen.
\newblock Numerics for the fractional {L}angevin equation driven by the
  fractional {B}rownian motion.
\newblock {\em Fract. Calc. Appl. Anal.}, 16(1):123--141, 2013.

\bibitem{HLPIneq}
G.~H. Hardy, J.~E. Littlewood, and G.~P\'{o}lya.
\newblock {\em Inequalities}.
\newblock Cambridge Mathematical Library. Cambridge University Press,
  Cambridge, 1988.
\newblock Reprint of the 1952 edition.

\bibitem{Higham2000A-stability}
Desmond~J. Higham.
\newblock {$A$}-stability and stochastic mean-square stability.
\newblock {\em BIT}, 40(2):404--409, 2000.

\bibitem{Higham2000stability}
Desmond~J. Higham.
\newblock Mean-square and asymptotic stability of the stochastic theta method.
\newblock {\em SIAM J. Numer. Anal.}, 38(3):753--769, 2000.

\bibitem{higham2021introduction}
Desmond~J Higham and Peter~E Kloeden.
\newblock {\em An Introduction to the Numerical Simulation of Stochastic
  Differential Equations}, volume 169.
\newblock SIAM, Philadelphia, 2021.

\bibitem{Higham2003Exponential}
Desmond~J. Higham, Xuerong Mao, and Andrew~M. Stuart.
\newblock Exponential mean-square stability of numerical solutions to
  stochastic differential equations.
\newblock {\em LMS J. Comput. Math.}, 6:297--313, 2003.

\bibitem{HRV2008}
Tien-Chung Hu, Andrew Rosalsky, and Andrei Volodin.
\newblock On convergence properties of sums of dependent random variables under
  second moment and covariance restrictions.
\newblock {\em Statist. Probab. Lett.}, 78(14):1999--2005, 2008.

\bibitem{Hu13}
Yaozhong Hu.
\newblock Multiple integrals and expansion of solutions of differential
  equations driven by rough paths and by fractional {B}rownian motions.
\newblock {\em Stochastics}, 85(5):859--916, 2013.

\bibitem{HN2007}
Yaozhong Hu and David Nualart.
\newblock Differential equations driven by {H}\"{o}lder continuous functions of
  order greater than 1/2.
\newblock In {\em Stochastic analysis and applications}, volume~2 of {\em Abel
  Symp.}, pages 399--413. Springer, Berlin, 2007.

\bibitem{huoksendal03}
Yaozhong Hu and Bernt {\O}ksendal.
\newblock Fractional white noise calculus and applications to finance.
\newblock {\em Infin. Dimens. Anal. Quantum Probab. Relat. Top.}, 6(1):1--32,
  2003.

\bibitem{Huang2014SDDEs}
Chengming Huang.
\newblock Mean square stability and dissipativity of two classes of theta
  methods for systems of stochastic delay differential equations.
\newblock {\em J. Comput. Appl. Math.}, 259(part A):77--86, 2014.

\bibitem{Kan2008}
Raymond Kan.
\newblock From moments of sum to moments of product.
\newblock {\em J. Multivariate Anal.}, 99(3):542--554, 2008.

\bibitem{Komori2012SROCK}
Yoshio Komori and Kevin Burrage.
\newblock Weak second order {S}-{ROCK} methods for {S}tratonovich stochastic
  differential equations.
\newblock {\em J. Comput. Appl. Math.}, 236(11):2895--2908, 2012.

\bibitem{LM2017}
Rafa\l Lata\l~a and Dariusz Matlak.
\newblock Royen's proof of the {G}aussian correlation inequality.
\newblock In {\em Geometric aspects of functional analysis}, volume 2169 of
  {\em Lecture Notes in Math.}, pages 265--275. Springer, Cham, 2017.

\bibitem{Mishura2008book}
Yuliya~S. Mishura.
\newblock {\em Stochastic calculus for fractional {B}rownian motion and related
  processes}, volume 1929 of {\em Lecture Notes in Mathematics}.
\newblock Springer-Verlag, Berlin, 2008.

\bibitem{NIST}
Frank W.~J. Olver, Daniel~W. Lozier, Ronald~F. Boisvert, and Charles~W. Clark,
  editors.
\newblock {\em N{IST} handbook of mathematical functions}.
\newblock U.S. Department of Commerce, National Institute of Standards and
  Technology, Washington, DC; Cambridge University Press, Cambridge, 2010.
\newblock With 1 CD-ROM (Windows, Macintosh and UNIX).

\bibitem{Mitsui1996stability}
Yoshihiro Saito and Taketomo Mitsui.
\newblock Stability analysis of numerical schemes for stochastic differential
  equations.
\newblock {\em SIAM J. Numer. Anal.}, 33(6):2254--2267, 1996.

\bibitem{Schurz2001}
Henri Schurz.
\newblock On moment-dissipative stochastic dynamical systems.
\newblock {\em Dynam. Systems Appl.}, 10(1):11--44, 2001.

\bibitem{szpruch2010strong}
Lukasz Szpruch and Xuerong Mao.
\newblock Strong convergence of numerical methods for nonlinear stochastic
  differential equations under monotone conditions.
\newblock {\em University of Strathclyde Mathematics and Statistics Research
  Report}, (3), 2010.

\bibitem{2012Moments}
Andreas Winkelbauer.
\newblock Moments and absolute moments of the normal distribution.
\newblock {\em arXiv preprint arXiv:1209.4340}, 2012.

\bibitem{WD2020}
Liang Wu and Yiming Ding.
\newblock Wavelet-based estimations of fractional {B}rownian sheet: least
  squares versus maximum likelihood.
\newblock {\em J. Comput. Appl. Math.}, 371:112609, 15, 2020.

\end{thebibliography}


\end{document}